\documentclass[11pt,thmsa]{article}%
\usepackage{amsmath}
\usepackage{amsfonts}
\usepackage{amssymb}
\usepackage{graphicx}%
\topmargin -0.8cm
\textwidth 14.5cm
\textheight 22cm
\newtheorem{theorem}{Theorem}[section]

\newtheorem{corollary}[theorem]{Corollary}

\newtheorem{definition}[theorem]{Definition}

\newtheorem{lemma}[theorem]{Lemma}

\newenvironment{proof}[1][Proof]{\noindent\textbf{#1.} }{\ \rule{0.5em}{0.5em}}

\begin{document}
\title{Finsler spheres with constant flag curvature and finite
orbits of prime closed geodesics\thanks{Supported by NSFC (no. 11771331) and NSF of Beijing (no. 1182006)}}
\author{Ming Xu\\
\\
School of Mathematical Sciences\\
Capital Normal University\\
Beijing 100048, P. R. China\\
Email: mgmgmgxu@163.com
\\
}
\date{}
\maketitle

\begin{abstract}
In this paper, we consider a Finsler sphere $(M,F)=(S^n,F)$ with the dimension $n>1$ and the flag curvature $K\equiv 1$.
The action of the connected isometry group $G=I_o(M,F)$ on $M$, together with the action of $T=S^1$ shifting the parameter $t\in \mathbb{R}/\mathbb{Z}$
of the closed curve $c(t)$, define an action of $\hat{G}=G\times T$ on the free loop space $\lambda M$ of $M$. In particular, for each closed geodesic, we have a $\hat{G}$-orbit of closed geodesics. We assume the Finsler sphere $(M,F)$ described above has only finite orbits of
prime closed geodesics. Our main theorem claims, if the
subgroup $H$ of all isometries preserving each close geodesic
has a dimension $m$, then there exists $m$ geometrically distinct orbits $\mathcal{B}_i$ of prime closed geodesics, such that for each $i$,
the union ${B}_i$ of geodesics in $\mathcal{B}_i$ is a totally
geodesic sub-manifold in $(M,F)$ with a non-trivial $H_o$-action.
This theorem generalizes and slightly refines the one in a previous
work, which only discussed the case of finite prime closed geodesics.
At the end, we show that, assuming certain generic conditions, the Katok metrics, i.e. the Randers metrics on spheres with $K\equiv 1$, provide examples with the sharp estimate for
our main theorem.

\textbf{Mathematics Subject Classification (2000)}: 22E46, 53C22, 53C60.

\textbf{Key words}: Katok metric, Randers sphere, constant flag curvature, orbit of closed geodesics, totally geodesic sub-manifold, fixed point
set.
\end{abstract}

\section{Introduction}

In the recent work \cite{BFIMZ2017} of
R. L. Bryant, P. Foulon, S. Ivanov, V. S. Matveev and W. Ziller,
the authors classified Finsler spheres with constant flag curvature $K\equiv 1$ according to the behavior of geodesics.
The Katok metric \cite{Ka1973}
provides the most important key model for their classification.
The celebrated Anosov Conjecture \cite{An1975}, claiming the minimal number of prime closed geodesics on a Finsler sphere $(S^n,F)$ is $2[\frac{n+1}{2}]$, was based on the discovery of
Katok metrics with only finite prime closed geodesics. There are many works using Morse theory and index theory to study the closed geodesics and Anosov Conjecture in Finsler geometry, assuming a pinch condition for the flag curvature, non-degenerating property for all closed geodesics, or using the speciality of low dimensions. See for example
\cite{BL2010}\cite{Du2016}\cite{DL2009}\cite{DLW2016}\cite{Ra1989}\cite{Wa2012}\cite{Wa2015}.
From the geometrical point of view, it was
much later that people noticed that Katok metrics are Randers metrics on spheres with
constant flag curvature \cite{Ra2004}. D. Bao, C. Robles and Z. Shen provided a complete classification for all Randers metrics with constant flag curvature\cite{BRS2004}.
The classification for the non-Randers case
is still widely open. R. L. Bryant provided many important
examples of Finsler spheres with $K\equiv 1$ \cite{Br1996}\cite{Br1997}\cite{Br2002}.

However, one of the most important technique in
\cite{BFIMZ2017} is from Lie theory. The authors considered
the antipodal map $\psi$
for a Finsler sphere with $K\equiv 1$ (see \cite{BFIMZ2017}\cite{Sh1997} or Section 2 for its definition).
It is a Clifford Wolf translation in the center of the isometry group $I(M,F)$.
When $\psi$ has an infinite order, after taking closure, it can be used to generate
a closed Abelian subgroup of isometries with a positive dimension.

For nonzero Killing vector fields on a Finsler sphere with $K\equiv 1$, we have the following totally geodesic technique.
The common zero point set of Killing vector fields, or more generally
the fixed point set of isometries, provide closed totally geodesic sub-manifolds. In particular, when the dimension of such a sub-manifold is one, it is a reversible geodesic, and when the dimension is even bigger, it is a Finsler sphere inheriting the curvature property and geodesic property from the ambient space. We can use this key observation to set up
an inductive argument, when studying the geodesics on $(S^n,F)$
with $n>2$ and $K\equiv 1$, and generalizing some results in
\cite{BFIMZ2017} to high dimensions.

For example, in \cite{Xu2018}, we have proved the following lower
bound estimate for the number of reversible prime closed geodesics in Finsler spheres with constant flag curvature.

\begin{theorem}\label{thm-1}
Let $(M,F)=(S^n,F)$ with $n>1$ be a Finsler sphere with $K\equiv 1$ and only finite prime closed geodesics. Then the number of geometrically distinct reversible closed geodesics is at least $\dim I(M,F)$.
\end{theorem}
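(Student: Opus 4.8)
The plan is to induct on $d=\dim I(M,F)=\dim G$, using the two structural facts highlighted in the introduction: the fixed point set of any isometry (equivalently the common zero set of Killing fields) is a closed totally geodesic submanifold, a one dimensional such set being precisely a reversible prime closed geodesic, while a higher dimensional one is again a Finsler sphere with $K\equiv 1$ inheriting the geodesic behaviour of the ambient space. The base case $d=0$ is vacuous. Before the step I record a normalization: since there are only finitely many prime closed geodesics and $G$ is connected, $G$ permutes this finite set trivially, so every $g\in G$ carries each prime closed geodesic $c$ to itself as an oriented curve; in particular $g\mapsto(\text{the rotation it induces on }|c|\cong S^1)$ is a continuous character $s_c\colon G\to T$, and $G$ preserves the orientation of $c$.

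For the inductive step assume $d\ge 1$. Not every $s_c$ can be trivial, since otherwise $G$ would fix every prime closed geodesic pointwise, which forces $d=0$; so I choose a prime closed geodesic $c$ with $s_c\colon G\to T$ surjective together with a circle subgroup $S^1_0\subseteq G$ mapped isomorphically onto $T$ by $s_c$. Because $S^1_0$ rotates $|c|\cong S^1$ nontrivially, the totally geodesic subsphere $N:=\mathrm{Fix}(S^1_0)$ is disjoint from $|c|$; being totally geodesic it is again a Finsler sphere with $K\equiv 1$ carrying only finitely many prime closed geodesics, all lying in the original family. On the other hand $\ker s_c$ has dimension $d-1$ and fixes $|c|$ pointwise, so $|c|\subseteq\mathrm{Fix}(\ker s_c)$; granting that this fixed set is one dimensional along $c$, the key fact makes $|c|$ totally geodesic and hence $c$ a reversible closed geodesic $\gamma_0$. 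The residual connected isometry group induced on $N$ has dimension at least $d-1$ (it receives the image of $\ker s_c$), so the inductive hypothesis yields at least $d-1$ geometrically distinct reversible closed geodesics in $N$; these are totally geodesic in $N$, hence totally geodesic and therefore reversible in $M$, and they all lie in $N$, so they differ from $\gamma_0\not\subseteq N$. Together this gives $d$ geometrically distinct reversible prime closed geodesics.

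I expect the delicate part to be showing that $|c|$ is genuinely a one dimensional component of $\mathrm{Fix}(\ker s_c)$ rather than sitting inside a larger totally geodesic subsphere. This fails exactly in the ``odd'' situation where $\ker s_c$ has a common fixed direction transverse to $|c|$ (the analogue of a rotation axis), where $|c|$ appears only as the equator of a totally geodesic $S^2\subseteq M$; the remedy is to run the reversibility argument one dimension lower on that $S^2$, a Randers/Katok type sphere whose equator is reversible via the reflection interchanging $F$ and its reverse metric, and this is precisely why the induction should be organized by $\dim G$ rather than by $\dim M$. A second point needing care is the bookkeeping that the residual group on $N$ really has dimension at least $d-1$ and acts effectively, i.e. that no positive dimensional subgroup of $G$ fixes every prime closed geodesic pointwise; here the classification of Finsler spheres with $K\equiv 1$ and finitely many closed geodesics in \cite{BFIMZ2017} is the natural input, since it forces $G$ to be abelian and pins down how the fixed circles are distributed among the corank one subtori. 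Finally one must verify geometric distinctness globally, which holds because at each stage the newly created geodesic lies off the totally geodesic subsphere carrying the previously created ones.
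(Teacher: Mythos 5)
Your setup is sound as far as it goes: with finitely many prime closed geodesics the connected group $G$ indeed preserves each geodesic (this is why the paper has $H_o=G$ in this case), the rotation characters $s_c$ are well defined, and your $\ker s_c$ is exactly the paper's subalgebra $\mathfrak{h}_i$. But there is a genuine gap at the heart of the argument: your only mechanism for producing a \emph{reversible} geodesic is to realize $|c|$ as a one-dimensional component of a fixed-point set, and this collapses precisely where the theorem is hardest. When $d=\dim G=1$, the kernel of a surjective character $s_c\colon S^1\to T$ is finite (possibly trivial), so $\mathrm{Fix}(\ker s_c)$ can be all of $M$ and your step yields no reversible geodesic at all; this includes the genuinely difficult case $S^2$ with $\psi$ of infinite order, where the existence of a reversible closed geodesic is the deep result of \cite{BFIMZ2017} that the paper imports as the base of its induction. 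Your proposed remedy for the ``odd'' case --- reversibility of the equator ``via the reflection interchanging $F$ and its reverse metric'' --- is unsound: a general Finsler sphere with $K\equiv 1$ admits no isometry between $F$ and its reverse (that symmetry is special to Katok-type examples), so this cannot substitute for the $S^2$ theorem. Relatedly, you never use the antipodal map $\psi$, which is the paper's engine: $\psi$ is a central Clifford--Wolf translation of infinite order with no fixed points, and isometries of the form $\phi=\psi\exp(t'X)$ are exactly what fix a chosen geodesic pointwise even when $\dim H=1$, with the fixed-point-freeness of $\psi$ guaranteeing that the resulting fixed sets are proper totally geodesic subspheres on which the induction (organized on $\dim M$, not $\dim G$) can continue.

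There are secondary gaps in the bookkeeping you flagged but did not close. First, $N=\mathrm{Fix}(S^1_0)$ can be empty or zero-dimensional: e.g.\ on a Katok $S^3$ with $G=T^2$, a splitting circle for $s_c$ with weights $(1,k)$, $k\neq 0$, acts freely, so $N=\emptyset$ and the inductive step produces nothing; even when nonempty, $N$ may be a two-point $\psi$-orbit or a single circle (cases (1)--(2) of Lemma \ref{lemma-a1-3-5}), which your induction on $\dim G$ does not handle. Second, the claim that the residual connected isometry group of $N$ has dimension at least $d-1$ is unjustified: the restriction homomorphism $\ker s_c\to I_o(N,F|_N)$ may a priori have positive-dimensional kernel consisting of isometries fixing $N$ pointwise. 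The paper excludes exactly this degeneration by Lemma \ref{lemma-6}(2) (no nonzero Killing field in $\mathfrak{h}$ vanishes on all closed geodesics, proved via the critical-point Lemma \ref{lemma-killing-1}) together with the intersection pattern $\cap_i\mathfrak{h}_i=0$, and then still needs the $\psi\exp(t'X)$ trick to manufacture the final, $m$-th, geodesic. Without these inputs --- the $S^2$ base case of \cite{BFIMZ2017}, the antipodal map, and the vanishing lemma for Killing fields --- your induction on $\dim G$ cannot be completed as written.
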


Recall that a geodesic $c(t)$ with constant speed is called {\it reversible} if $c(-t)$ also provides a geodesic with constant
speed after a re-parametrization by the new arc length. Two geodesics are {\it geometrically distinct} iff they are different subsets.

The assumption of only finite prime closed geodesics imposes a strong restriction on $I_o(M,F)$, which can only be a torus. A lot of important examples are excluded, for example, the standard unit spheres and the homogeneous non-Riemannian Randers
spheres with $K\equiv 1$. So if we want more possibility for $I_o(M,F)$, the geodesic condition
could be replaced by the assumption that there exist only finite orbits of prime closed geodesics, or
Assumption (F) for simplicity. See Section 3 for its precise
definition and detailed discussion.

%

The main purpose of this paper is to prove the following
theorem.

\begin{theorem}\label{main-thm}
Let $(M,F)=(S^n,F)$ be a Finsler sphere satisfying $n>1$,
$K\equiv 1$ and Assumption (F). Denote $H$ the subgroup of
$G=I_o(M,F)$ preserving each closed geodesics, $H_o$ its identity component and $m=\dim H$.
Then there exist at least $m$ geometrically distinct orbits
$\mathcal{B}_i$'s of prime closed geodesics such that each union ${B}_i$ of geodesics in $\mathcal{B}_i$ is a totally geodesic
sub-manifold in $M$ with a non-trivial $H_o$-action.
\end{theorem}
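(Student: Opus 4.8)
The plan is to exploit the \emph{totally geodesic technique} recalled in the introduction: fixed point sets of isometries and common zero sets of Killing fields on a Finsler sphere with $K\equiv 1$ are closed totally geodesic subspheres inheriting both $K\equiv1$ and Assumption (F). First I would pin down the structure of $H_o$. Since $H_o$ is connected and preserves each prime closed geodesic $c\cong S^1$ setwise, it must act on $c$ by orientation preserving isometries, i.e. by rotations; this yields a Lie group homomorphism $\rho_c\colon H_o\to S^1$ for every $c$. Consequently $[H_o,H_o]$ fixes every closed geodesic pointwise, and since the closed geodesics under Assumption (F) should be abundant enough to force their common pointwise stabilizer in $G$ to be trivial, I expect $H_o$ to be a torus $T^m$ acting effectively on $M$. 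The action of $H_o$ on a given $c$ is then \emph{non-trivial} precisely when $\rho_c\neq 0$, and \emph{trivial} precisely when $c\subseteq\mathrm{Fix}(H_o)$.

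Second, I would produce the $m$ directions by a weight analysis of the torus action. Passing to the common fixed set $N=\mathrm{Fix}(H_o)$ (or, if $N=\varnothing$, to the fixed set of a maximal subtorus with nonempty fixed set, using a slice along a closed geodesic as a substitute), the isotropy representation of $T^m$ on the normal space at a point $p\in N$ splits into two dimensional weight spaces $V_{\alpha_1},\dots,V_{\alpha_k}$ with weights $\alpha_j\in\mathfrak{h}^*\cong\mathbb{R}^m$, where $\mathfrak{h}=\mathrm{Lie}(H_o)$. Effectiveness of the $H_o$-action forces the $\alpha_j$ to span $\mathfrak{h}^*$, so among them there are at least $m$ linearly independent, hence pairwise distinct, weights. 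For each such weight $\alpha_j$ set $K_j=(\ker\alpha_j)_o\cong T^{m-1}$; then $\mathrm{Fix}(K_j)$ is a closed totally geodesic subsphere strictly larger than $N$, on which the residual circle $T^m/K_j$ acts non-trivially with fixed set $N$, and which again carries $K\equiv1$ and Assumption (F).

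Third, inside each $\mathrm{Fix}(K_j)$ I would locate the closed geodesics and assemble them into a single $\hat{G}$-orbit $\mathcal{B}_j$. Because $\mathrm{Fix}(K_j)$ is a lower dimensional Finsler sphere with $K\equiv1$ and only finitely many orbits of closed geodesics, an inductive application of the analysis (with the base case governed by the surface and curve situation of \cite{BFIMZ2017} and \cite{Xu2018}) should show that the closed geodesics meeting $\mathrm{Fix}(K_j)$ and realizing the non-trivial residual $S^1$-action fill out a totally geodesic submanifold $B_j$ on which $H_o$ acts non-trivially; taking $\mathcal{B}_j$ to be the corresponding $\hat{G}$-orbit makes $B_j=\bigcup\mathcal{B}_j$. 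Linear independence of the chosen weights $\alpha_j$ guarantees that the subspheres $\mathrm{Fix}(K_j)$, and hence the orbits $\mathcal{B}_j$, are geometrically distinct, yielding the desired $m$ of them. I would finally check compatibility with Theorem~\ref{thm-1}: in the finite prime closed geodesic case $H=G=T^m$, each $B_j$ degenerates to a single reversible closed geodesic and the count $m=\dim I(M,F)$ is recovered.

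The main obstacle I anticipate is twofold. The delicate point of the first step is proving $H_o$ is a torus without circular reasoning, i.e. showing that $[H_o,H_o]$, which fixes every closed geodesic pointwise, is actually trivial; this requires knowing that the union of closed geodesics is large enough (for instance, not contained in a single proper totally geodesic subsphere that $[H_o,H_o]$ could rotate in normal directions), which must be extracted from the $K\equiv1$ geometry rather than assumed. The delicate point of the second and third steps is that a union of $G$-translates of a geodesic need not \emph{a priori} be an embedded submanifold; I would circumvent this precisely by realizing each $B_j$ as a component of a fixed point set $\mathrm{Fix}(K_j)$, which is automatically closed and totally geodesic, and then arguing that the relevant closed geodesics sweep it out. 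Ensuring that distinct independent weights really yield distinct subspheres, and that none of the $m$ chosen orbits coincides with a trivial ($H_o$-fixed) one, is the combinatorial heart of the estimate and is where the spanning property of the weights does the essential work.
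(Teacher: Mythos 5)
Your overall architecture is genuinely close in spirit to the paper's: your ``weights span $\mathfrak{h}^*$'' statement is the dual of the paper's Lemma \ref{lemma-6}, your corank-one kernels $K_j=(\ker\alpha_j)_o$ correspond to the paper's codimension-one subalgebras $\mathfrak{h}_i\subset\mathfrak{h}$, and your distinctness argument via $\mathrm{Fix}(K_j)\cap\mathrm{Fix}(K_{j'})=\mathrm{Fix}(H_o)$ is sound. But there are two genuine gaps. The first and most serious is the mismatch in the inductive step: applying the theorem inductively to $(\mathrm{Fix}(K_j),F|_{\mathrm{Fix}(K_j)})$ produces orbits whose unions are totally geodesic with non-trivial action of $H'_o$, where $H'$ is the geodesic-preserving subgroup of $I_o(\mathrm{Fix}(K_j),F|_{\mathrm{Fix}(K_j)})$ --- a group that can be strictly larger than the image of $H_o$ --- and nothing in your setup guarantees the residual circle $H_o/K_j$ acts non-trivially on any of the orbits the induction hands you; they could all sit inside $N=\mathrm{Fix}(H_o)$. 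The paper spends real effort exactly here: it shows the offending orbit (non-trivial $H'_o$-action, trivial $H_o$-action) is unique and forces $\dim H'>m-1$, buying an extra orbit, and for the final $m$-th orbit it uses an isometry $\phi=\psi\exp(t'X)$ whose fixed point set can contain no closed geodesic with trivial $H_o$-action because the antipodal map $\psi$ is fixed-point-free. Your proposal never invokes $\psi$ at all, yet it is the engine of the paper's argument. Note also that your proposed escape --- realizing $B_j$ as a \emph{component} of $\mathrm{Fix}(K_j)$ --- cannot work as stated: positive-dimensional fixed point sets here are connected (Lemma \ref{lemma-a1-3-5}), so $B_j$ would have to be all of $\mathrm{Fix}(K_j)$, i.e. a single orbit sweeping out the whole subsphere, which by Corollary \ref{cor-1} happens only in the homogeneous case. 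A direct max-point argument via Lemma \ref{lemma-killing-1} does give one closed geodesic in $\mathrm{Fix}(K_j)$ with non-trivial $H_o$-action, but then the totally geodesic property of the union of its orbit is precisely what remains unproved.

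The second gap is the anchor point of your weight analysis: $N=\mathrm{Fix}(H_o)$ can be empty --- this actually occurs for the homogeneous non-Riemannian Randers spheres of Corollary \ref{cor-1}(2), where the circle generated by the navigation field acts freely --- and your parenthetical fallback (``a slice along a closed geodesic as a substitute'') is not an argument. The paper's replacement is to define the linear functionals on orbits of closed geodesics rather than at a fixed point: centrality of $H_o$ in $G$ (Lemma \ref{lemma-4}, whose proof via the maximum of $F(X(\cdot))$ and Lemma \ref{lemma-killing-1} also fills your flagged but unresolved step that $[H_o,H_o]$ fixes too much and must be trivial) gives $X|_{c}=\rho_{X,i}\dot{c}$ on every geodesic of $\mathcal{B}_i$ (Lemma \ref{lemma-5}), and Lemma \ref{lemma-6}(2) shows the functionals $\rho_{\cdot,i}\in\mathfrak{h}^*$ have trivial common kernel. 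This is your spanning statement, but anchored on the geodesics themselves, so it survives $N=\varnothing$. With that substitution your scheme essentially merges with the paper's; as written, however, steps two and three contain holes that the paper's Claim 1/Claim 2 induction and the fixed-point-free antipodal map are specifically designed to close.
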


When $(M,F)$ has only finite prime closed geodesics, then
Assumption (F) is satisfied, $H_o=G=I_o(M,F)$, and each
orbit of closed geodesics consists of only one closed geodesic. So Theorem \ref{main-thm} generalizes Theorem \ref{thm-1}.
It even slightly refines Theorem \ref{thm-1} by claiming
the totally geodesic ${B}_i$'s found have non-trivial
$H_o$-actions. So if the common zero point of $H_o$ has a
positive dimension, it provides one more totally geodesic ${B}_i$, which is either a reversible closed geodesic
which length is a rational multiple of $\pi$, or isometric to a standard unit sphere.

This paper is organized as following. In Section 2, we recall
some fundamental geometric properties of Finsler spheres with $K\equiv 1$, discussing their antipodal maps and totally geodesic
sub-manifolds. In Section 3,
we define Assumption (F), i.e. the assumption of
only finite prime closed geodesics.
In Section 4, we
introduce the subgroup $H$ of isometries which preserves
each closed geodesics.
In Section 5, we prove Theorem \ref{main-thm} by induction.
In Section 6, we discuss the Katok metrics, and
show that in some cases they provides examples for Theorem \ref{main-thm}, for which the estimate in Theorem \ref{main-thm} is sharp.

{\bf Acknowledgement.} The author would like to thank sincerely
Chern Institute of Mathematics, Nankai University, and Shaoqiang Deng for the hospitality during the preparation for
this paper. The author also thanks Yuri G. Nikonorov and Huagui Duan for helpful discussions.

\section{Preliminaries: from antipodal map to Killing vector field}

Let $(M,F)=(S^n,F)$ be a Finsler sphere satisfying the dimension $n>1$ and the flag curvature $K\equiv 1$.
Denote $G=I_o(M,F)$ the connected isometry group, i.e. the identity component of the isometry group $I(M,F)$ of $(M,F)$.


We briefly recall the definition of the exponential map \cite{BCS2000} and the antipodal map
$\psi$ \cite{BFIMZ2017} \cite{Sh1997} for $(M,F)$.

For any $x\in M$ and nonzero $y\in T_xM$, the {\it exponential map}
$\mathrm{Exp}_x:T_xM\rightarrow M$ is defined by $\mathrm{Exp}_x(y)=c(1)$ where
$c(t)$ is the constant speed geodesic with $c(0)=x$ and $\dot{c}(0)=y$. When $y=0\in T_xM$, we define $\mathrm{Exp}_x(0)=x$. Notice that $\mathrm{Exp}_x$
is $C^1$ at $y=0$ and $C^\infty$ elsewhere.

The discussion
for the Jacobi fields and conjugation points when $K\equiv 1$ indicates $\mathrm{Exp}_x$ maps the sphere
$$S^F_o(\pi)=\{y\in T_xM| F(y)=\pi\}\subset T_xM$$ to a single
point $x^*\in M$. The map from $x$ to $x^*$ is an isometry of $(M,F)$ in the center of $I(M,F)$ \cite{BFIMZ2017}. Further more, it is easy to see that $\psi$ is a Clifford Wolf
translation for the (possibly non-reversible) distance $d_F(\cdot,\cdot)$ defined by the Finsler metric $F$.
We will call it the {\it antipodal map} and always denote it
as $\psi$. It is a generalization for the antipodal map for
standard unit spheres but may not be an involution any more.

The above description immediately proves that any connected and simply connected Finsler manifold $(M,F)$ with $\dim M>1$ and $K\equiv 1$ is homeomorphic to
a sphere. A more careful discussion with the local charts shows that the homeomorphism in this statement can be refined to be
a diffeomorphism, and the argument is valid not only for $M$,
but also any closed connected totally geodesic sub-manifold $N$ with $\dim N>1$, i.e.
we have the following lemma (Lemma 3.2 in \cite{Xu2018}).

\begin{lemma} Let $(M,F)$ be a connected and simply connected Finsler manifold with $K\equiv 1$ and $N$ a closed connected totally geodesic sub-manifold with $\dim N>1$. Then both $M$ and $N$
are diffeomorphic to standard spheres, and $N$ is an imbedded
sub-manifold in $M$.
\end{lemma}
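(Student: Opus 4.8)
The plan is to treat the ambient sphere $M$ first and then bootstrap the same exponential-map picture onto the totally geodesic $N$. Fix $x\in M$ and write $B^F_o(\pi)=\{y\in T_xM:\ F(y)<\pi\}$ for the open ball bounded by $S^F_o(\pi)$. The Jacobi field and conjugate point analysis for $K\equiv 1$ shows that no geodesic issuing from $x$ has a conjugate point before parameter $\pi$, so $\mathrm{Exp}_x$ is a local diffeomorphism on $B^F_o(\pi)$; since $(M,F)$ is simply connected this forces $\mathrm{Exp}_x$ to restrict to a diffeomorphism of $B^F_o(\pi)$ onto $M\setminus\{x^*\}$ while sending $S^F_o(\pi)$ to the single point $x^*=\psi(x)$. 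First I would record that $M$ is therefore the quotient of the closed ball $\overline{B^F_o(\pi)}$ obtained by collapsing its boundary sphere to a point, a quotient that is homeomorphic to $S^n$.

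To upgrade this homeomorphism to a diffeomorphism I would examine the two ``poles'' separately. Near $x$ the map $\mathrm{Exp}_x$ provides a chart that is $C^1$ at $0$ and $C^\infty$ elsewhere; the same construction based at $x^*$ provides a chart covering $M\setminus\{\psi(x^*)\}$, in particular a neighborhood of $x^*$. Since $\psi$ displaces every point by $d_F$-distance $\pi$, we have $\psi(x^*)\neq x^*$, so these two charts cover $M$; their overlap on the complement of the two centers is smooth because both are built from smooth exponential maps, and gluing them to the standard atlas of $S^n$ yields the asserted diffeomorphism $M\cong S^n$.

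The crux of the lemma is transferring this to $N$ \emph{without} assuming $N$ is simply connected. Because $N$ is totally geodesic, every geodesic of $N$ issuing from a point $x\in N$ is a geodesic of $M$, so $\mathrm{Exp}_x^N=\mathrm{Exp}_x^M|_{T_xN}$, and the induced metric $F|_N$ has $K\equiv 1$ for every flag tangent to $N$ (the second fundamental form vanishes, so the tangential Jacobi fields are intrinsic). As $N$ is closed it is complete, and since the geodesics tangent to $N$ reach $x^*=\psi(x)$ at parameter $\pi$ while $N$ is closed, we get $x^*\in N$. By Hopf--Rinow each point of $N$ is joined to $x$ by a minimizing geodesic of $N$, which is a geodesic of $M$; the injectivity of $\mathrm{Exp}_x^M$ on $B^F_o(\pi)$ then forces its restriction to the linear slice $T_xN\cap B^F_o(\pi)$ to be injective and to map onto $N\setminus\{x^*\}$. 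Thus the same boundary-collapse description holds inside $N$, giving $N\cong S^{\dim N}$ by the argument of the previous two paragraphs applied to $(N,F|_N)$. This direct restriction is the step I expect to demand the most care, since one must verify that the conjugate-point and cut-locus facts of $M$ genuinely restrict to tangent directions along $N$, rather than re-deriving them intrinsically; it is precisely this that circumvents the potential failure of simple connectivity of $N$.

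Finally, for embeddedness I would use the normal chart $\Phi=(\mathrm{Exp}_x^M)^{-1}\colon M\setminus\{x^*\}\to B^F_o(\pi)$. Under $\Phi$ the set $N\setminus\{x^*\}$ is carried exactly onto the linear disk $T_xN\cap B^F_o(\pi)$, so $N\setminus\{x^*\}$ is an embedded submanifold of $M\setminus\{x^*\}$; the analogous chart based at $x^*$ embeds $N\setminus\{\psi(x^*)\}$ and covers a neighborhood of $x^*$. These two open sets cover $M$, and on each the inclusion appears as a coordinate slice, so the inclusion $\iota\colon N\hookrightarrow M$ is injective and an immersion. Since $N$ is compact, an injective immersion is automatically a closed embedding, which completes the proof.
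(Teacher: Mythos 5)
Your route coincides with the paper's own: the paper proves this lemma only in sketch form (it is quoted as Lemma 3.2 of \cite{Xu2018}), namely that $\mathrm{Exp}_x$ collapses $S^F_o(\pi)$ to the single point $x^*$, giving the homeomorphism, that ``a more careful discussion with the local charts'' refines this to a diffeomorphism, and that the same picture restricts to the totally geodesic $N$. Your treatment of $M$ up to \emph{homeomorphism} is sound: $\mathrm{Exp}_x|_{B^F_o(\pi)}$ is a proper surjective local homeomorphism onto $M\setminus\{x^*\}$ (properness because $x^*$ is the unique point at distance $\pi$ from $x$, so preimages of compacta cannot accumulate on $S^F_o(\pi)$), hence a covering, hence injective by simple connectivity. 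Your treatment of $N$ is the strongest part and matches the paper's intent exactly: inheriting injectivity of $\mathrm{Exp}^N_x$ from that of $\mathrm{Exp}^M_x$ on $B^F_o(\pi)$, combined with Bonnet--Myers and Hopf--Rinow inside $(N,F|_N)$, correctly circumvents the fact that $N$ is \emph{not} assumed simply connected, and compactness upgrades the injective immersion to an imbedding.

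The genuine gap is the single sentence upgrading the homeomorphism to a diffeomorphism --- precisely the step the paper itself flags as requiring care. Two problems. First, $\mathrm{Exp}_x$ is only $C^1$ at the origin (it is smooth there essentially only in the Berwald case), so your two exponential charts are not smooth charts at their centers, and ``gluing them to the standard atlas of $S^n$'' is not even well defined at the two poles. Second, and more seriously, even after repairing the smoothness at the centers, a closed manifold covered by two balls is only a \emph{twisted} sphere: it is homeomorphic to $S^n$, but for $n\geq 7$ twisted spheres can carry exotic smooth structures (Milnor's exotic $7$-spheres arise exactly this way), so ``covered by two smooth balls'' does not imply ``diffeomorphic to the standard sphere.'' To close this one must analyze the concrete transition map, which in polar form is $(r,\theta)\mapsto(\pi-r,\sigma(\theta))$, where $\sigma$ carries an initial direction at $x$ to the reversed terminal direction at $x^*$ of the time-$\pi$ geodesic, and show that this boundary diffeomorphism of $S^{n-1}$ extends to a diffeomorphism of the disk --- equivalently, one must carry out the refined local-chart analysis that the paper delegates to Lemma 3.2 of \cite{Xu2018}. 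The same caveat applies verbatim to your diffeomorphism conclusion for $N$; the imbeddedness claim, by contrast, survives as you argued it.
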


The fixed point set for a family of isometries in $I(M,F)$
is a closed, possibly disconnected, totally geodesic sub-manifold.
We have the following
lemma (Lemma 3.5 in \cite{Xu2018}), indicating the connectedness of $N$, when its dimension is positive.

\begin{lemma}\label{lemma-a1-3-5}
Let $(M,F)=(S^n,F)$ be a Finsler sphere with $n>1$ and
$K\equiv 1$, and $N$ the fixed point set of a family of
isometries of $(M,F)$. Then $N$ must satisfy one of
the following
\begin{description}
\item{\rm (1)} $N$ is a two-points $\psi$-orbit, i.e. $N=\{x',x''\}$ with $d_F(x',x'')=d_F(x'',x')=\pi$.
\item{\rm (2)} $N$ is a reversible closed geodesic.
\item{\rm (3)} $(N,F|_N)$ is a Finsler sphere with $\dim N>1$ and $K\equiv 1$.
\end{description}
\end{lemma}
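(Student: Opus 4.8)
The plan is to analyze $N$ according to its dimension, using as the central tool the fact, recalled in this section, that when $K\equiv 1$ the exponential map $\mathrm{Exp}_p$ is a diffeomorphism from the open ball $\{v\in T_pM : F(v)<\pi\}$ onto $M\setminus\{\psi(p)\}$, so that for any $q\neq \psi(p)$ there is a unique minimizing geodesic from $p$ to $q$. First I would record the engine of the argument: if $p,q\in N$ and $q\neq\psi(p)$, then the unique minimizing geodesic $\gamma$ from $p$ to $q$ lies in $N$. Indeed, each isometry $g$ in the defining family fixes $p$ and $q$ and preserves arc length, hence carries the unique minimizing geodesic from $p$ to $q$ to itself and fixes it pointwise; thus $\gamma\subset N$. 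I would also note that, because $\psi$ lies in the center of $I(M,F)$, we have $g\psi=\psi g$ for every $g$ in the family, so $\psi(N)=N$; in particular $\psi(x)\in N$ whenever $x\in N$.

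Next I would treat the case $\dim N=0$, giving conclusion (1). Pick $x'\in N$ and set $x''=\psi(x')$. Then $x''\in N$, and $d_F(x',x'')=\pi$ forces $x''\neq x'$. For any $z\in N$ with $z\neq x'$ and $z\neq x''$, the engine produces a nontrivial geodesic segment inside $N$, contradicting $\dim N=0$; hence $N=\{x',x''\}$. Because $\psi(N)=N$ and $\psi(x'')\neq x''$, we get $\psi(x'')=x'$, i.e. $\psi^2(x')=x'$; combining the Clifford--Wolf property of $\psi$ for $F$ and for the reverse metric $\bar F$ then yields $d_F(x',x'')=d_F(x'',x')=\pi$.

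For positive dimension I would first establish connectedness. If $N$ were disconnected, choose a component $N_0$ with $\dim N_0\geq 1$, hence infinite, and a point $q$ in another component; since $N_0$ is infinite I can pick $p\in N_0$ with $p\neq\psi(q)$, and the engine then joins $q$ to $p$ by a geodesic in $N$, contradicting that they lie in different components. So $N$ is connected. When $\dim N=1$, $N$ is therefore a closed geodesic $c$; reversibility (conclusion (2)) follows because the fixed subspace of the linearized isotropy action at a point $p\in c$ is exactly $T_pc=\mathbb{R}\dot c(0)$, which also contains $-\dot c(0)$, so the geodesic issuing from $p$ in the direction $-\dot c(0)$ is fixed pointwise and hence retraces $c$; this exhibits the reverse of $c$ as a reparametrized geodesic. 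When $\dim N\geq 2$, the connected closed totally geodesic submanifold $N$ is, by the preceding lemma, diffeomorphic to a standard sphere and imbedded, and since a totally geodesic submanifold inherits the flag curvature of its tangent flags, $(N,F|_N)$ is a Finsler sphere with $K\equiv 1$, giving conclusion (3).

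The main obstacle I anticipate is the engine step together with its consequences: carefully justifying the uniqueness of the minimizing geodesic from the $K\equiv1$ Jacobi-field and cut-locus analysis, and organizing the connectedness argument so that it simultaneously rules out stray lower-dimensional components. The reversibility claim is the subtle point in the Finsler setting, since the reversed curve of a geodesic is generally not a geodesic; the resolution is to produce the reverse geodesic intrinsically as the one generated by the isotropy-fixed direction $-\dot c(0)$, which is available precisely because the fixed subspace at $p$ is the full line $\mathbb{R}\dot c(0)$.
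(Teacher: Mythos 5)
Your proof is correct, and it cannot be checked against an in-paper argument because the paper does not prove this lemma at all: it quotes it verbatim as Lemma 3.5 of \cite{Xu2018}. Your argument is the natural one and is fully consistent with the paper's surrounding framework: the ``engine'' (for $K\equiv 1$, $\mathrm{Exp}_p$ is a diffeomorphism from the open $\pi$-ball onto $M\setminus\{\psi(p)\}$, so the unique minimizing geodesic between fixed non-antipodal points is itself fixed pointwise), the $\psi$-invariance of $N$ coming from centrality of $\psi$, the connectedness argument in positive dimension, and the intrinsic production of the reverse geodesic from the direction $-\dot c(0)$ --- which is indeed the genuinely Finslerian point, since reversing a geodesic is not automatic. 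Two minor remarks. First, the lemma implicitly assumes $N\neq\emptyset$ (compare Lemma \ref{lemma-6}(3), where emptiness is listed separately); your case analysis silently starts from a point $x'\in N$, which is fine given that reading. Second, your assertion that the fixed subspace $V$ of the linearized action at $p\in c$ equals $T_pc$ deserves the one-line justification your own engine already supplies: $g\circ\mathrm{Exp}_p=\mathrm{Exp}_p\circ dg_p$ shows $\mathrm{Exp}_p(V)\subset N$, while uniqueness of minimizing geodesics from $p$ gives the reverse inclusion, so locally $N=\mathrm{Exp}_p(V\cap B)$ and hence $V=T_pN=\mathbb{R}\dot c(0)$; with that line inserted, the reversibility step (and likewise the inheritance of $K\equiv 1$ in case (3), which the paper itself only asserts for totally geodesic submanifolds) is complete.
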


The space of Killing vector fields can be viewed as the Lie algebra of $I(M,F)$. So the common zero set of a family of Killing vector
fields on $(M,F)$ is a special case of fixed point sets for
isometries.

In later discussions, we will need the following two lemmas
for Killing vector fields.

\begin{lemma}\label{lemma-killing-1}
Assume that $X$ is a Killing vector field
of the Finsler space $(M,F)$, $f(\cdot)=F(X(\cdot))$ and $f(x)>0$ at $x\in M$.
Then the integration curve of $X$ passing $x$ is a geodesic
iff $x$ is a critical point of $f(\cdot)$.
\end{lemma}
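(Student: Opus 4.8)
The plan is to reduce the statement to a single pointwise identity relating $df_x$ to the covariant acceleration of the integral curve, extracted from the first variation of arc length. Write $\phi_t$ for the one-parameter group of isometries generated by $X$, so that the integral curve through $x$ is $c(t)=\phi_t(x)$ with $\dot c(t)=X(c(t))$. The first observation I would record is that $f$ is constant along $c$: since $X$ is invariant under its own flow, $X(\phi_t(x))=d\phi_t(X(x))$, and because each $\phi_t$ is an isometry, $f(c(t))=F(d\phi_t X(x))=F(X(x))=f(x)$. In particular $c$ has constant Finslerian speed $\ell:=f(x)>0$, so that $c$ being a geodesic means precisely $D_{\dot c}\dot c\equiv 0$, where $D_{\dot c}$ denotes the covariant derivative along $c$ with reference vector $\dot c$ (Chern connection).

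To capture the derivative of $f$ transverse to $c$, I would fix $v\in T_xM$, choose a curve $\gamma(s)$ with $\gamma(0)=x$ and $\dot\gamma(0)=v$, and form the variation $c_s(t)=\phi_t(\gamma(s))$, $t\in[0,a]$. Each $c_s$ is again an integral curve of $X$, so $F(\dot c_s(t))=f(\gamma(s))$ is independent of $t$ and $L(c_s)=\int_0^a F(\dot c_s)\,dt=a\,f(\gamma(s))$; hence $\frac{d}{ds}\big|_{0}L(c_s)=a\,df_x(v)$. On the other hand, the variation field along $c$ is $V(t)=d\phi_t(v)$, and the first variation formula \cite{BCS2000}, together with $F(\dot c)\equiv\ell$, gives
\[
\frac{d}{ds}\Big|_{0}L(c_s)=\frac1\ell\Big[g_{\dot c}(\dot c,V)\Big]_0^{a}-\frac1\ell\int_0^{a} g_{\dot c}\big(D_{\dot c}\dot c,\,V\big)\,dt ,
\]
where $g_y$ is the fundamental tensor. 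Because each $\phi_t$ is an isometry it preserves $g$ in the sense $g_{d\phi_t y}(d\phi_t\cdot,d\phi_t\cdot)=g_y(\cdot,\cdot)$ and commutes with the geodesic spray, so that $\dot c(t)=d\phi_t(\dot c(0))$, $V(t)=d\phi_t(v)$ and $D_{\dot c}\dot c(t)=d\phi_t\big(D_{\dot c}\dot c(0)\big)$. Consequently the boundary term cancels and the integrand is constant in $t$; comparing the two expressions and cancelling $a$ yields
\[
df_x(v)=-\frac1\ell\,g_{X(x)}\big(D_{\dot c}\dot c(0),\,v\big)\qquad\text{for all }v\in T_xM .
\]

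Since $f(x)>0$, the vector $X(x)$ is nonzero and $g_{X(x)}$ is a genuine inner product on $T_xM$; therefore the right-hand side vanishes for every $v$ if and only if $D_{\dot c}\dot c(0)=0$, i.e. $x$ is a critical point of $f$ iff the acceleration of $c$ vanishes at $t=0$. Finally, the equivariance $D_{\dot c}\dot c(t)=d\phi_t\big(D_{\dot c}\dot c(0)\big)$ propagates this vanishing along the whole curve, so $D_{\dot c}\dot c(0)=0$ is equivalent to $D_{\dot c}\dot c\equiv 0$, that is, to $c$ being a geodesic; this gives both directions of the asserted equivalence at once.

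The steps requiring the most care are the precise form of the Finsler first variation formula and the behaviour of the Chern connection: unlike the Riemannian case the connection is only almost metric-compatible, so the integration by parts producing the displayed formula must be carried out with the fundamental tensor $g_{\dot c}$ referenced consistently to $\dot c$, and the claimed $\phi_t$-invariance of $D_{\dot c}\dot c$ should be justified from the fact that isometries carry the spray to itself. Once these Finsler technicalities are handled, the computation is the exact analogue of the classical Riemannian identity $\nabla_X X=-\tfrac12\nabla f^2$ for Killing fields, with $\langle\cdot,\cdot\rangle$ replaced by $g_{X(x)}$.
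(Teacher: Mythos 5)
Your proof is correct, but it cannot be said to follow the paper, because the paper contains no argument here at all: Lemma \ref{lemma-killing-1} is disposed of in one line as ``a direct corollary of Lemma 3.1 in \cite{DX2014}''. Your first-variation argument is thus a genuinely self-contained alternative to an external citation, and it is a good one. Its core is the pointwise identity $df_x(v)=-\frac{1}{\ell}\,g_{X(x)}\bigl(D_{\dot c}\dot c(0),v\bigr)$, obtained by playing off $L(c_s)=a\,f(\gamma(s))$ against the first variation formula; since $g_{X(x)}$ is positive definite at the nonzero vector $X(x)$, this delivers both implications of the equivalence simultaneously, and it exhibits the exact Finsler analogue of the Riemannian identity $\nabla_XX=-\tfrac12\,\mathrm{grad}\,|X|^2$ --- more information than the citation route provides. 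The equivariance step ($\dot c(t)=d\phi_t\dot c(0)$, $V(t)=d\phi_t(v)$, $D_{\dot c}\dot c(t)=d\phi_t(D_{\dot c}\dot c(0))$, valid because isometries preserve the geodesic spray and the fundamental tensor) is exactly what kills the boundary term and makes the integrand $t$-independent, and your caution about almost-metric-compatibility is well placed but harmless here: the integration by parts producing your displayed formula is legitimate because the Cartan tensor vanishes whenever one slot equals the reference vector, so $\frac{d}{dt}\,g_{\dot c}(V,\dot c)=g_{\dot c}(D_{\dot c}V,\dot c)+g_{\dot c}(V,D_{\dot c}\dot c)$ holds exactly along the constant-speed curve $c$. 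Two micro-points worth a sentence each in a final write-up, neither affecting correctness: the first variation formula requires the comparison curves $c_s$ to be regular, which follows since $F(\dot c_s(t))=f(\gamma(s))>0$ for small $s$ by continuity of $f$; and $f$ is smooth in a neighborhood of the orbit of $x$ because $X$ is nonvanishing there (Finsler metrics are only $C^0$ at the zero section), so ``critical point of $f$'' is meaningful exactly where you use it.
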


This is a direct corollary of Lemma 3.1 in \cite{DX2014}.

\begin{lemma} \label{lemma-killing-2}
Assume that $c=c(t)$ is a geodesic
of positive constant speed on the Finsler space $(M,F)$. Then restricted $c(t)$, any Killing vector field $X$ of $(M,F)$
satisfies
\begin{equation}\label{002}
\langle X(c(t)),\dot{c}(t)\rangle_{\dot{c}(t)}^F\equiv\mathrm{const},
\end{equation}
where $\langle u,v\rangle_y^F=g_{ij}u^iv^j$ for $u,v,y\in T_xM$ and $y\neq 0$ is the
inner product defined by the fundamental tensor.
\end{lemma}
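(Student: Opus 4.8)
The plan is to deduce the conservation law from the fact that a Killing field generates a one-parameter group of isometries, combined with the first variation formula for the energy functional. Let $\phi_s$ denote the local flow generated by $X$; since $X$ is Killing, each $\phi_s$ is an isometry of $(M,F)$ and in particular preserves $F$. Fix any subinterval $[t_1,t_2]$ and consider the variation $c_s(t)=\phi_s(c(t))$ of the geodesic $c$, whose variation field along $c$ is exactly $U(t)=X(c(t))$. Because $\phi_s$ is an isometry, $F(\dot c_s(t))=F(\dot c(t))$ for all $s$ and $t$, so the energy $E(c_s|_{[t_1,t_2]})=\tfrac12\int_{t_1}^{t_2}F(\dot c_s)^2\,dt$ is independent of $s$; hence $\frac{d}{ds}\big|_{s=0}E(c_s)=0$.

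Next I would compute this same first variation directly, using the Chern connection $D$ and its almost metric compatibility. Writing $T=\dot c$, the Finslerian first variation of energy reads $\frac{d}{ds}\big|_{s=0}E(c_s)=\big[\langle T,U\rangle^F_T\big]_{t_1}^{t_2}-\int_{t_1}^{t_2}\langle D_TT,U\rangle^F_T\,dt$. The crucial simplification is Finsler-specific: along the geodesic the reference vector is $T$ itself and $D_TT=0$, which both kills the interior integrand outright and makes the Cartan-tensor correction in the almost-compatibility identity $\frac{d}{dt}\langle U,W\rangle^F_T=\langle D_TU,W\rangle^F_T+\langle U,D_TW\rangle^F_T+2C_T(D_TT,U,W)$ vanish, since it carries the factor $D_TT$. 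Thus only the boundary term survives, and it equals $\langle \dot c(t_2),X(c(t_2))\rangle^F_{\dot c(t_2)}-\langle \dot c(t_1),X(c(t_1))\rangle^F_{\dot c(t_1)}$.

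Combining the two computations, this boundary difference vanishes for every choice of $t_1<t_2$, so the function $t\mapsto\langle X(c(t)),\dot c(t)\rangle^F_{\dot c(t)}$ takes the same value at any two parameters and is therefore constant, which is the claimed identity.

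The only genuinely delicate point is the Finslerian first variation itself: unlike in the Riemannian case, the inner product $\langle\cdot,\cdot\rangle^F_T$ depends on the varying reference vector $T$, so one must track the extra Cartan-tensor terms produced both when differentiating $F^2=\langle T,T\rangle^F_T$ in $s$ and when applying almost metric compatibility in $t$. The main obstacle is thus to verify that every such correction is proportional to $D_TT$ and hence disappears on a geodesic; once this is in hand the argument is formally identical to the Riemannian Clairaut-type computation. Alternatively one could bypass the variational picture and differentiate $\langle X,\dot c\rangle^F_{\dot c}$ directly along $c$, using $D_{\dot c}\dot c=0$ to remove the Cartan correction and the infinitesimal Killing equation to show $\langle D_{\dot c}X,\dot c\rangle^F_{\dot c}=0$; I prefer the flow-of-isometries route, since it avoids having to write the Finslerian Killing equation explicitly.
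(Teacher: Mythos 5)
Your proof is correct, but it takes a genuinely different route from the paper. The paper argues in adapted local coordinates: at parameters where $X(c(t))$ is linearly independent of $\dot c(t)$ it straightens the picture so that $c$ is the $x^1$-axis and $X=\partial_{x^2}$, uses the Killing property to make $F$ independent of $x^2$, and reads the conservation law off the vanishing of the spray coefficients $\mathbf{G}^i$ along the geodesic; the degenerate case where $X$ is tangent to $c$ is handled separately, and the two cases are glued by continuity. Your argument is instead the global Noether-type one: the flow $\phi_s$ of $X$ consists of isometries, so the energy of $c_s=\phi_s\circ c$ is $s$-independent, while the Finslerian first variation of energy along a geodesic reduces to the boundary term $\bigl[\langle \dot c,X(c)\rangle^F_{\dot c}\bigr]_{t_1}^{t_2}$, forcing that quantity to be constant. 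You correctly flag the only delicate point, namely that the Cartan-tensor corrections in the first variation and in almost metric compatibility disappear along a geodesic; besides the $D_{\dot c}\dot c=0$ factors you cite, the derivation of the first variation also uses $C_{\dot c}(\dot c,\cdot,\cdot)=0$, which holds by homogeneity, so the standard formula you invoke is exact. What each approach buys: the paper's computation is elementary and self-contained, needing nothing beyond the spray coefficients, at the cost of a case split and a continuity-gluing step; your variational argument is uniform (no case distinction at points where $X\parallel\dot c$) and conceptually cleaner, at the cost of importing the Chern-connection first-variation machinery. Your suggested alternative of differentiating $\langle X,\dot c\rangle^F_{\dot c}$ directly and using the infinitesimal Killing equation is also valid and is essentially the standard proof found in the homogeneous Finsler geometry literature.
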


\begin{proof}
Whenever the value of $X$ is linearly independent of $\dot{c}(t)$, we can prove (\ref{002}) by choosing
a special local chart, such that $c=c(t)$ can be presented
as $x^1=t$ and $x^i=0$ for $i>1$, and $X=\partial_{x^2}$.
Because $X$ is Killing vector field, $F(x,y)$ is independent
of $x^2$. The condition that $c=c(t)$ is a geodesic implies
that for the coefficients $\mathbf{G}^i$ of the geodesic spray,
we have
\begin{eqnarray*}
\mathbf{G}^i(c(t),\dot{c}(t))&=&
\frac{1}{4}g^{il}([F^2]_{x^my^l}y^m-[F^2]_{x^l})\\
&=&\frac{1}{4}g^{il}([F^2]_{x^1y^l}-[F^2]_{x^l})
=0.
\end{eqnarray*}
In particular, on the geodesic $c=c(t)$, we have
$$\langle X(c(t)),\dot{c}(t)\rangle_{\dot{c}(t)}^F
=\frac{1}{2}[F^2]_{x^1y^2}=\frac{1}{2}[F^2]_{x^2}=0,$$
which proves the lemma in this case.

When $X$ is tangent to $c=c(t)$ for $t$ in an interval $I$,
we can easily get (\ref{002}) for $t\in I$.

Summarizing this two cases and using the continuity, we have
proved (\ref{002}) along the whole geodesic $c=c(t)$.
\end{proof}

\section{Orbit of closed geodesics and Assumption (F)}

Now we define {\it Assumption (F)}, i.e. the condition that $(M,F)$ has only finite orbits of prime closed geodesics. In later discussion, we will always assume it to be
satisfied by $(M,F)$ unless otherwise specified.

The free loop
space $\Lambda M$ of all piecewise smooth path $c=c(t)$ with $t\in\mathbb{R}/\mathbb{Z}$ (sometimes we will simply denote it as $c$ or $\gamma$) admits the natural actions of
$\hat{G}=G\times T$ such that
$$((g,t')\cdot c)(t)=g\cdot c(t+t'),\quad\forall t.$$
So for each closed geodesic $\gamma$ of constant speed, we have an $\hat{G}$-orbit $\hat{G}\cdot \gamma$ of closed geodesics with the same speed. The geodesic $c(t)$ (with $t\in\mathbb{R}/\mathbb{Z}$) is {\it prime}, i.e.
$$\min\{t| t>0\mbox{ and }c(t)=c(0)\}=1,$$ iff all the closed geodesics in $\hat{G}\cdot c$ are prime.

\begin{definition}\label{def-1}
We say $(M,F)$ has only finite orbits of prime closed geodesics, or simply it satisfies Assumption (F), if all
the prime closed geodesics of positive constant speed
can be listed as a finite set of $\hat{G}$-orbits, $\mathcal{B}_i=\hat{G}\cdot \gamma_i$, $1\leq i\leq k$.
\end{definition}

In Definition \ref{def-1}, we can equivalently list all
the closed geodesics of constant speed $c(t)$ with $t\in\mathbb{R}/\mathbb{Z}$
as
$\mathcal{B}_i^j=\hat{G}\cdot\gamma_i^j$, $1\leq i\leq k$, $j\in\mathbb{N}$. The orbit $\mathcal{B}_i$ in Definition \ref{def-1} coincides with $\mathcal{B}_i^1$, for each $i$. The closed geodesics $\gamma_i^j$ is the one which
rotates $j$-times along the prime closed geodesic $\gamma_i$ in Definition \ref{def-1}, i.e. if $\gamma_i$ is presented as $c_i=c_i(t)$, then $\gamma_i^j$ is $c_{i,j}(t)=c_i(jt)$.

We denote ${B}_i$ the union of the geodesics in
$\mathcal{B}_i$ or $\mathcal{B}_i^j$ for any $j\in\mathbb{N}$. Then we call
$\mathcal{B}_i^j$ and $\mathcal{B}_{i'}^{j'}$ {\it geometrically distinct}
(or {\it geometrically the same}), if ${B}_i$
and ${B}_{i'}$ are different
subsets (or the same subsets, respectively) of $M$.

The Assumption (F) for the ambient space can be inherited by some
totally geodesic sub-manifolds, i.e. we have the following lemma.

\begin{lemma}\label{lemma-0}
Let $(M,F)$ be any closed compact Finsler manifold satisfying Assumption (F),
$\phi_\alpha$ with $\alpha\in\mathcal{A}$ a family of isometries in the center of $I(M,F)$, and
$N$ the fixed point set for all $\phi_\alpha$'s. Then
each orbit of prime closed geodesic for $(N,F|_{N})$
is also an orbit of prime closed geodesic for $(M,F)$. In particular, $(N,F|_{N})$ also satisfies Assumption (F).
\end{lemma}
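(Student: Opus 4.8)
The plan is to establish two things: first, that a geodesic of $(N,F|_N)$ is automatically a geodesic of $(M,F)$; and second, that the $\hat{G}$-orbit structure of a prime closed geodesic inside $N$ lifts to a $\hat{G}$-orbit in $M$, so that Assumption (F) for $M$ restricts to Assumption (F) for $N$. For the first point I would invoke the totally geodesic property of $N$, which holds because $N$ is the fixed point set of a family of isometries $\phi_\alpha$ (this is exactly the fixed-point-set statement recalled just before Lemma \ref{lemma-a1-3-5}). Thus if $c=c(t)$ is a closed geodesic of constant speed in $(N,F|_N)$, then $c$ is also a closed geodesic of the same constant speed in $(M,F)$, and it is prime in $N$ iff it is prime in $M$ since primeness is a statement about the curve as a subset of points together with its minimal period, which does not depend on the ambient space.

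Next I would control how the ambient orbit $\hat{G}\cdot c$ meets $N$. The key is that each $\phi_\alpha$ lies in the \emph{center} of $I(M,F)$, hence commutes with every $g\in G=I_o(M,F)$. Therefore, for any $(g,t')\in\hat{G}$, the shifted-and-translated geodesic $(g,t')\cdot c$ is again fixed by every $\phi_\alpha$: indeed $\phi_\alpha\bigl(g\cdot c(t+t')\bigr)=g\cdot\phi_\alpha\bigl(c(t+t')\bigr)=g\cdot c(t+t')$, using that $c(s)\in N$ for all $s$. Consequently the entire orbit $\hat{G}\cdot c$ consists of closed geodesics lying in $N$, and each is prime exactly when $c$ is prime. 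This shows that the ambient $\hat{G}$-orbit of a prime closed geodesic of $N$ is itself a set of prime closed geodesics of $M$ all contained in $N$, so it coincides with the $\hat{G}$-orbit of $c$ computed in $M$; in particular each orbit of prime closed geodesics of $(N,F|_N)$ sits inside a single orbit of prime closed geodesics of $(M,F)$.

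Finally I would deduce the finiteness. By Assumption (F) for $(M,F)$, there are only finitely many $\hat{G}$-orbits of prime closed geodesics in $M$. By the previous step, distinct orbits of prime closed geodesics in $N$ are contained in distinct ambient orbits (two geodesics of $N$ lying in the same ambient orbit differ by an element of $\hat G$ and hence lie in the same $N$-orbit as well, again because the $\hat G$-action preserves $N$ pointwise-fixed-by-$\phi_\alpha$). Hence the number of orbits of prime closed geodesics in $(N,F|_N)$ is bounded by the number in $(M,F)$ and is therefore finite, giving Assumption (F) for $N$. The main obstacle I anticipate is the bookkeeping in this last step: one must check carefully that the $\hat{G}$-action on $\Lambda N$ agrees with the restriction of the $\hat{G}$-action on $\Lambda M$, which again reduces to the commutativity of the central $\phi_\alpha$ with $G$ and the fact that the circle factor $T$ acts purely by reparametrization and therefore preserves $N$ trivially.
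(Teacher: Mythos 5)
Your argument never engages with the group that actually defines the orbits on $N$: by Definition \ref{def-1} applied to $(N,F|_N)$, an orbit of prime closed geodesics of $N$ is an orbit of $\hat{G}'=G'\times T$ with $G'=I_o(N,F|_N)$, and $G'$ may be strictly larger than the restriction of $G=I_o(M,F)$ to $N$ (think of $N$ a round sphere whose intrinsic isometry group is much bigger than what the ambient space supplies). Centrality of the $\phi_\alpha$'s gives you only the easy inclusion: each $g\in G$ preserves $N$, and since $G$ is connected its restriction lands in the identity component $G'$, so $\hat{G}\cdot\gamma\subseteq\hat{G}'\cdot\gamma$. What the lemma asserts is essentially the reverse inclusion $\hat{G}'\cdot\gamma\subseteq\hat{G}\cdot\gamma$, i.e.\ that intrinsic isometries of $N$, which need not extend to $M$, cannot move $\gamma$ out of its ambient orbit. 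In your second paragraph you show (correctly) that $\hat{G}\cdot c$ stays inside $N$, and then conclude that ``each orbit of prime closed geodesics of $(N,F|_N)$ sits inside a single orbit of $(M,F)$'' --- that is a non sequitur: nothing you have said constrains isometries of $N$ that do not come from $G$. The same gap resurfaces in your counting step: the map you implicitly use, from orbits in $N$ to ambient orbits, is injective by your parenthetical remark (the easy direction), but it is not well-defined without the missing inclusion, since a priori a single $\hat{G}'$-orbit could meet several ambient orbits.

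The paper closes exactly this gap using Assumption (F) together with a topological argument: since $\hat{G}\cdot\gamma'\subseteq\hat{G}'\cdot\gamma$ for every $\gamma'\in\hat{G}'\cdot\gamma$, the set $\hat{G}'\cdot\gamma$ is a union of ambient $\hat{G}$-orbits, finite in number by Assumption (F); these orbits are pairwise disjoint and compact (hence closed), whereas $\hat{G}'\cdot\gamma$, being a continuous image of the compact connected group $\hat{G}'$, is compact and connected, so the union can consist of only one orbit, giving $\hat{G}'\cdot\gamma=\hat{G}\cdot\gamma$. Note that Assumption (F) is therefore needed already for the \emph{first} claim of the lemma, not merely for the finiteness count at the end, which is the only place your proposal invokes it. Note also that even were your well-definedness issue repaired, ``sits inside a single ambient orbit'' plus finiteness is weaker than what the lemma states and what the paper later uses: the equality $\mathcal{B}_i=\hat{G}'\cdot\gamma_i=\hat{G}\cdot\gamma_i$ appears explicitly in the inductive argument of Section 5, so the compactness--connectedness step cannot be dispensed with.
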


\begin{proof}
The fixed point set $N$ for the isometries $\phi_\alpha$ with $\alpha\in\mathcal{A}$
is a closed (possibly disconnected) totally geodesic
sub-manifold of $(M,F)$. Because each $\phi_\alpha$ commutes with all isometries of $(M,F)$, the fixed point set $N$ for all $\phi_\alpha$'s is preserved by
the action of $G=I_o(M,F)$. The restriction of $G$-action to $N$ defines isometries in $G'=I_o(N,F|_{N})$. Denote  $\hat{G}'=G'\times T$. Then for each prime closed geodesic $\gamma$ in $N$, Assumption (F) implies that
$\hat{G}'\cdot \gamma$ is a disjoint finite union of $\hat{G}$-orbits. Both $\hat{G}'$-orbits and $\hat{G}'$-orbits are compact and connected, so we get $\hat{G}'\cdot\gamma=\hat{G}\cdot\gamma$, which proves the first claim.
The second claim follows immediately.
\end{proof}

The effect of Assumption (F) can be seen from the behavior of
the antipodal map $\psi$. For example, when $\psi$ has a finite order $k$, i.e. there exists a positive integer $k$, such that
$$\psi^k=\mathrm{id}, \mbox{ and }\psi^i\neq\mathrm{id}
\mbox{ when } 1\leq i<k,$$
we have the following lemma.

\begin{lemma}\label{lemma-1}
Let $(M,F)=(S^n,F)$ be a Finsler sphere satisfying $n>1$, $K\equiv 1$ and Assumption (F). Assume that the antipodal
map $\psi$ has a finite order $k$, then $F$ must be
the Riemannian metric for a standard unit sphere.
\end{lemma}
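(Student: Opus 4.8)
My plan is to show that finite order of $\psi$ forces every geodesic to close up, and then to feed this into Assumption (F) to produce an isometry action large enough to pin down the round metric. The first step is immediate from the conjugate-point description of $\psi$. If $c$ is any unit-speed geodesic, then for each $u$ the reparametrized curve $t\mapsto c(u+t)$ is the unit-speed geodesic issuing from $c(u)$, so evaluating $\mathrm{Exp}_{c(u)}$ on the $F$-sphere of radius $\pi$ gives $c(u+\pi)=\psi(c(u))$. Thus $\psi\circ c=c(\,\cdot+\pi)$ for every geodesic, and iterating with $\psi^k=\mathrm{id}$ yields $c(u+k\pi)=c(u)$: every geodesic is closed with length dividing $k\pi$. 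Differentiating $\psi\circ c=c(\,\cdot+\pi)$ also identifies the time-$\pi$ geodesic flow $\Phi_\pi$ on the unit tangent bundle $SM$ with the natural lift $\psi_*$ of $\psi$, so $\Phi_{k\pi}=\mathrm{id}$.

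Next I would exploit compactness. Since $(M,F)$ is a compact Finsler sphere, $G=I_o(M,F)$ is a compact Lie group, hence so is $\hat{G}=G\times T$, and every $\hat{G}$-orbit in $SM$ is compact and therefore closed. Because every geodesic is now closed, each unit vector lies on a prime closed geodesic, so by Assumption (F) the whole of $SM$ is a finite union of $\hat{G}$-orbits. At least one of them has full dimension $2n-1$ and is therefore open; being also closed, and $SM$ being connected (here $n>1$ is used), this orbit must be all of $SM$. Equivalently, passing to the quotient $SM/\Phi$, on which the reparametrization factor $T$ acts trivially, $G$ acts transitively on the space of oriented geodesics. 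This is the precise sense in which Assumption (F) together with finite order of $\psi$ makes the isometry action maximal.

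The last step, deducing that $F$ is the round Riemannian metric, is the main obstacle, because the transitive action just obtained involves the geodesic flow and so does not by itself make the isotropy of $G$ transitive on the individual unit spheres $S_xM$, which is what would force each Minkowski norm to be Euclidean via an averaging argument. To close this gap I would argue that every geodesic is a homogeneous geodesic: the relation $\Phi_\pi=\psi_*$ already exhibits $\psi$ as an isometry preserving each geodesic and shifting it by $\pi$, and I expect to upgrade this finite shift to a full one-parameter family, so that $G$ becomes transitive on $SM$ itself. Then the compact isotropy $G_x$ acts transitively on $S_xM$, averaging a $G_x$-invariant inner product shows $F_x$ is Euclidean, and a simply connected Riemannian space of constant curvature $1$ is the round unit sphere. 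I anticipate that establishing the homogeneity of the geodesics cleanly will require either an induction on $n$ through the totally geodesic subspheres of Lemma \ref{lemma-a1-3-5}, whose Assumption (F) is inherited via Lemma \ref{lemma-0}, with the base case $n=2$ settled by the classification of compact transformation groups of $S^2$, or a direct appeal to the structure theory of constant-flag-curvature Finsler spheres.
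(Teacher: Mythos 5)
Your first two steps are sound, and your second step is in fact a genuinely different argument from the paper's: where you cover the unit tangent bundle $SM$ by finitely many compact orbits of $G\times\Phi$ (using $\Phi_{k\pi}=\mathrm{id}$) and extract an open orbit by a Baire-category/dimension count, the paper works in the loop space, observing that every prime closed geodesic has an iterate of length exactly $k\pi$, so the set $B$ of all closed geodesics of length $k\pi$ is a \emph{connected} set covered by finitely many compact $\hat{G}$-orbits, forcing a single orbit. Both routes validly establish that all geodesics lie in one orbit modulo parameter shift.

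The genuine gap is your third step, and your own diagnosis of the difficulty is accurate but your proposed repairs do not close it. Two ideas are missing. First, transitivity of $G\times\Phi$ on $SM$ does not give transitivity of $G$ on $M$: it only yields $M=G\cdot c(\mathbb{R})$ for a single closed geodesic $c$, leaving a possibly one-dimensional orbit space. The paper bridges this with Lemma \ref{lemma-killing-2}: if $G$ were not transitive, one picks a unit vector $g_{\dot{c}(0)}$-orthogonal to the $G$-orbit through $c(0)$; since $\langle X(c(t)),\dot{c}(t)\rangle^F_{\dot{c}(t)}$ is constant along geodesics for every Killing field $X$, this geodesic meets \emph{every} $G$-orbit orthogonally, a property invariant under the $\hat{G}$-action and violated by geodesics tangent to positive-dimensional orbits --- contradicting the single-orbit property. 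Second, your plan to ``upgrade the finite shift to a full one-parameter family'' is a dead end precisely in the situation of this lemma: $\psi$ has finite order $k$, so the closure of $\langle\psi\rangle$ is a finite group and generates no one-parameter subgroup (that upgrade is exactly what the paper exploits in the \emph{infinite}-order case elsewhere). Instead, the paper invokes the theorem of Yan--Huang \cite{HY2017} that every compact homogeneous Finsler space admits at least one homogeneous geodesic; transported around the single $\hat{G}$-orbit, this makes \emph{all} geodesics homogeneous, and then a two-shift composition ($g_1$ matching $c_1$ to $c_2(\cdot+t_0)$, followed by $g_2=\exp(-t_0 X)$ along the homogeneous geodesic $c_2$) shows the isotropy group is transitive on each $S_xM$, whence $F$ is the round metric, as in your final reduction. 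Neither of your fallback strategies (induction through totally geodesic subspheres, or the structure theory of constant-flag-curvature spheres --- which is open in the non-Randers case) is carried out or obviously workable, so as written the proof of the lemma's actual conclusion is not established.
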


\begin{proof}
Because $\psi$ is a Clifford Wolf translation, and it has
a finite order $k$, each geodesic of $(M,F)$ is closed, and
each prime closed geodesic admits a suitable multiple such that
the length of the resulting closed geodesic is $k\pi$.
By Assumption (F), the subset $B\subset\Lambda M$ of
all closed geodesics with the length $k\pi$ can be listed as the disjoint union of
$\mathcal{B}_i^{n_i}=\hat{G}\cdot\gamma_i^{n_i}$, $1\leq i\leq k$, where each $\gamma_i$ is a prime closed geodesic. Obviously $B$ is connected and each $\mathcal{B}_i^{n_i}$ is compact, so
we must have $k=1$.

Then we prove $(M,F)$ is $G$-homogeneous. Assume conversely
that it is not, we consider a unit speed geodesic $c(t)$, and
the $G$-orbit $N$ passing $c(0)$, such that
\begin{equation}\label{000}
\langle \dot{c}(0),T_{c(0)}N\rangle^F_{\dot{c}(0)}=0.
\end{equation}
Then by Lemma \ref{lemma-killing-2}, for any Killing vector field $X\in\mathfrak{g}$, we have
$$\langle\dot{c}(t),X(c(t))\rangle_{\dot{c}(t)}^F\equiv 0,$$
i.e. $c(t)$ meets each $G$-orbit orthogonally in the sense of (\ref{000}). This property is
 preserved by $\hat{G}$-actions. So its $\hat{G}$-orbit can not exhaust all
the geodesics, for example, those which does not satisfy (\ref{000}).
This is a contradiction to our previous observation that $(M,F)$ can only have one orbit of prime closed geodesics, and it proves that $(M,F)$ is homogeneous Finsler sphere.

Finally, we prove $(M,F)$ is a standard unit sphere. Because $(M,F)$ is a homogeneous
Finsler space, it has at least one homogeneous geodesic $c(t)=\exp(tX)\cdot o$, in which $o\in M$ and $X\in \mathfrak{g}=
\mathrm{Lie}(G)$ \cite{HY2017}. Our previous observation that all geodesics belong to a single $\hat{G}$-orbit implies all geodesics are homogeneous. So for
any $x\in M$ and any two $F$-unit tangent vectors $y_1$ and $y_2$ in $T_xM$, we have two unit speed geodesics $c_1(t)$ and
$c_2(t)$ such that $c_1(0)=c_2(0)=x$ and $\dot{c}_i(0)=y_i$.
Both geodesics belong to the same $\hat{G}$-orbit, so we can find $g_1\in G$ such that $(g_1\cdot c_1)(t)\equiv c_2(t+t_0)$ for some fixed $t_0$. Because the geodesic $c_2(t)$ is homogeneous, we can find another $g_2\in G$ such that
$(g_2\cdot c_2)(t)=c_2(t-t_0)$. Then we have
$$(g_2g_1\cdot c_1)(t)=(g_2\cdot c_2)(t+t_0)=c_2(t),\quad\forall t.$$
So the isotropy action for $(M,F)$ is transitive at each point. The only homogeneous spheres satisfying this property are Riemannian
spheres of constant curvature.

This ends the proof of the lemma.
\end{proof}

Using Lemma \ref{lemma-0} and Lemma \ref{lemma-1}, we can
generalize Lemma 3.6 in \cite{Xu2018} to the following.

\begin{lemma}\label{lemma-2}
Let $(M,F)=(S^n,F)$ be a Finsler sphere satisfying $n>1$, $K\equiv 1$ and Assumption (F). Then the union $N$ of all the finite orbits of $\psi$ in $M$ must be one of the following:
\begin{description}
\item{\rm (1)} A two-points $\psi$-orbit.
\item{\rm (2)} A closed reversible geodesic which length is rational multiple of $\pi$.
\item{\rm (3)} A Riemannian sphere of constant curvature isometrically imbedded in $(M,F)$ as a totally geodesic
    sub-manifold. In this case we have $k=2$.
\end{description}
\end{lemma}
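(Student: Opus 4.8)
The plan is to pin down the set $N$ explicitly and then feed the result back into Lemma \ref{lemma-a1-3-5}, Lemma \ref{lemma-0} and Lemma \ref{lemma-1}. First I would form the compact abelian group $\mathcal{T}=\overline{\langle\psi\rangle}\subset I(M,F)$; since $\psi$ lies in the center of $I(M,F)$, so does $\mathcal{T}$, its identity component $\mathcal{T}_o$ is a torus, and $\mathcal{T}/\mathcal{T}_o$ is a finite cyclic group topologically generated by the class of $\psi$. The decisive first step is to show that $N=\mathrm{Fix}(\mathcal{T}_o)$: for any $x$ the orbit closure $\overline{\langle\psi\rangle\cdot x}$ equals the compact orbit $\mathcal{T}\cdot x$, so the $\psi$-orbit of $x$ is finite iff $\mathcal{T}\cdot x$ is finite, and because $\mathcal{T}_o$ is connected this happens exactly when $\mathcal{T}_o\cdot x=\{x\}$. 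I expect this identification to be the conceptual crux: it is important that the relevant fixed set is that of the identity component $\mathcal{T}_o$, not of $\psi$ itself nor of all of $\mathcal{T}$. Indeed $\psi$ need not fix $N$ pointwise, which is precisely what leaves room for the genuinely two-point $\psi$-orbit of conclusion (1).

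With $N=\mathrm{Fix}(\mathcal{T}_o)$ established I would split on $\dim\mathcal{T}_o$. If $\mathcal{T}_o$ is trivial then $\psi$ has finite order and $N=M$, so Lemma \ref{lemma-1} forces $(M,F)$ to be a standard unit sphere, whose antipodal map is the involution of order $k=2$; this is conclusion (3). If $\dim\mathcal{T}_o\ge 1$, then $N$ is the fixed point set of the family of central isometries $\mathcal{T}_o$, so Lemma \ref{lemma-a1-3-5} presents it as either two points, a reversible closed geodesic, or a Finsler sphere of dimension bigger than $1$ with $K\equiv 1$, while Lemma \ref{lemma-0} guarantees that $(N,F|_N)$ again satisfies Assumption (F). Two observations about the action of $\psi$ on $N$ drive the remaining analysis: $\psi$ preserves $N$ because it commutes with $\mathcal{T}_o$, and the power $\psi^{d}$ with $d=|\mathcal{T}/\mathcal{T}_o|$ lies in $\mathcal{T}_o$ and therefore fixes $N$ pointwise, so the restriction $\psi|_N$ is an isometry of $N$ of finite order.

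I would then conclude in each of the three shapes. In the two-point case, $\psi$ is a Clifford--Wolf translation of positive displacement $\pi$ and hence has no fixed point; since it preserves $N=\{x',x''\}$ it must swap the two points, so $N$ is a single $\psi$-orbit with $d_F(x',x'')=d_F(x'',x')=\pi$, which is conclusion (1). In the closed-geodesic case, total geodesy together with the defining property of $\psi$ (moving distance $\pi$ from $x$ in every direction to the one point $\psi(x)$) identifies $\psi|_N$ with the arclength-$\pi$ shift along the circle $N$; since $\psi|_N$ has finite order, the length of $N$ is a rational multiple of $\pi$, conclusion (2). In the case $\dim N>1$, the same computation identifies $\psi|_N$ with the intrinsic antipodal map $\psi_N$ of $(N,F|_N)$, which thus has finite order, and Lemma \ref{lemma-1} applied to $N$ makes it a standard unit sphere, whose antipodal map has order $k=2$; this is conclusion (3).

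The step needing the most care is this last one: I must verify that the restriction $\psi|_N$ genuinely coincides with the intrinsic antipodal map $\psi_N$ of the totally geodesic sphere $N$, so that Lemma \ref{lemma-1} is applicable, rather than being some unrelated finite-order isometry. Equally, in the two-point case the whole argument hinges on the no-fixed-point property of a Clifford--Wolf translation of displacement $\pi$, which is what distinguishes a true two-point $\psi$-orbit from a pair of separate fixed points.
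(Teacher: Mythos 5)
Your proof is correct and takes essentially the same route as the paper: the paper likewise identifies $N$ as the fixed point set of $\psi^k$ for some integer $k$ (citing the closure argument of Lemma 3.6 in \cite{Xu2018}, which is precisely your $\mathcal{T}_o$ argument), and then concludes through Lemma \ref{lemma-a1-3-5}, Lemma \ref{lemma-0} and Lemma \ref{lemma-1} exactly as you do. The only difference is that you make explicit what the paper leaves implicit, notably that $\psi|_N$ coincides with the intrinsic antipodal map of the totally geodesic sphere $N$ and has finite order, which is what justifies applying Lemma \ref{lemma-1} to $(N,F|_N)$.
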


\begin{proof}
By the same argument as in the proof of Lemma 3.6 in \cite{Xu2018}, we can proof $N$ is the fixed point set
of $\psi^k$ for some integer $k$, hence it is totally
geodesic in $(M,F)$.
When $\dim N=0$ or $1$, we get the cases (1) and (2)
respectively. The difference appears when $\dim N>1$,
which may happen with the finite orbit of prime closed
geodesics condition. When $\dim N>1$, by Lemma \ref{lemma-a1-3-5},
$(N,F|_N)$ is a Finsler sphere satisfying $K\equiv 1$. By Lemma
\ref{lemma-0}, $(N,F|_N)$ also satisfies Assumption (F).
Then Lemma \ref{lemma-1} provides the case (2) in the lemma.
\end{proof}

The cases (2) and (3) cover all the possibilities for the
$\hat{G}$-orbit of a prime closed geodesic $\gamma$ such
that the length of $\gamma$ is a rational multiple of $\pi$.

Next, we consider the $\hat{G}$-orbit of a prime closed geodesic $\gamma$ such that the length of $\gamma$ is an
irrational multiple of $\pi$.

When the length of $\gamma$ is an irrational multiple of $\pi$,
any $\psi$-orbit in $\gamma$ is dense. Following this observation, we can easily prove the following lemma.

\begin{lemma}\label{lemma-3}
Let $(M,F)=(S^n,F)$ be a Finsler sphere satisfying $n>1$, $K\equiv 1$
and Assumption (F). Then two geometrically distinct closed geodesics can intersect iff
they are intersecting geodesics in the totally geodesic sub-manifold in $(M,F)$ which is isometric to a unit sphere, i.e. the case (3) in Lemma \ref{lemma-2}.
\end{lemma}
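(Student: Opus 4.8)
The plan is to prove the non-trivial implication, namely that any intersection of two geometrically distinct closed geodesics forces both of them into a case-(3) submanifold; the converse is immediate, since in a totally geodesic standard unit sphere any two distinct great circles meet, in a pair of antipodal points.

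The whole argument rests on one structural fact about the antipodal map: since $\psi(y)$ is reached from $y$ by travelling arclength $\pi$ along \emph{any} geodesic through $y$, the map $\psi$ stabilizes every closed geodesic setwise and translates it by arclength $\pi$. Thus, if $x$ lies on both $\gamma_1$ and $\gamma_2$, the entire $\psi$-orbit $\{\psi^k(x):k\in\mathbb{Z}\}$ stays inside $\gamma_1\cap\gamma_2$. First I would use this to exclude irrational lengths. Suppose $\gamma_1$ has length an irrational multiple of $\pi$; then $\psi$ acts on $\gamma_1$ as an irrational rotation, so $\{\psi^k(x)\}$ is dense in $\gamma_1$. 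Since all these points also lie in the compact, hence closed, set $\gamma_2$, we obtain ${B}_1\subseteq {B}_2$, and as both are embedded circles this forces ${B}_1={B}_2$, contradicting geometric distinctness. Hence both $\gamma_1$ and $\gamma_2$ must have length a rational multiple of $\pi$.

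Next I would invoke Lemma \ref{lemma-2}. Every point of a geodesic whose length is a rational multiple of $\pi$ has a finite $\psi$-orbit, so both $\gamma_1$ and $\gamma_2$ are contained in the set $N$ of all finite $\psi$-orbits. By Lemma \ref{lemma-2}, $N$ is a single connected object of type (1), (2) or (3). Type (1), a two-point set, is excluded because $\gamma_1$ is one-dimensional; type (2), a single reversible closed geodesic, would force $\gamma_1=\gamma_2=N$ as subsets and again contradict geometric distinctness. Therefore $N$ is a totally geodesic submanifold isometric to a standard unit sphere, and $\gamma_1,\gamma_2$ are intersecting great circles inside it, which is precisely case (3).

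The only delicate point is the density step: one must be certain that $\psi$ genuinely stabilizes each geodesic and shifts it by the fixed arclength $\pi$, so that the induced dynamics on $\gamma_1$ is a rigid rotation whose rationality is governed solely by the length, and that a dense subset of the circle $\gamma_1$ sitting inside the circle $\gamma_2$ upgrades to equality of the two image sets. Everything else is a dimension count together with the appeal to Lemma \ref{lemma-2}, so I expect no further obstacle.
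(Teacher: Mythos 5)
Your proposal is correct and follows essentially the same route as the paper: $\psi$ translates every closed geodesic by arclength $\pi$, so an intersection point yields a $\psi$-orbit inside $\gamma_1\cap\gamma_2$ that is dense when a length is an irrational multiple of $\pi$ (forcing the two sets to coincide, a contradiction), while in the rational case both geodesics lie in the set $N$ of finite $\psi$-orbits and Lemma \ref{lemma-2} pins $N$ down to case (3). One harmless slip: your justification of the trivial converse asserts that any two distinct great circles in a unit sphere must meet, which fails for $S^m$ with $m\geq 3$ (e.g.\ disjoint Hopf circles in $S^3$); but the converse direction of the lemma is tautological, so nothing in your argument depends on this.
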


\begin{proof}
Lemma \ref{lemma-2} indicates that any two geometrically distinct closed geodesics $\gamma_1$ and $\gamma_2$ must satisfy one of the following. Either both lengths
are $2\pi$ or one of them, for example $\gamma_1$, has a length which is an irrational multiple of $\pi$.
In the first case, they are contained in a totally geodesic
sub-manifold of $(M,F)$ which is isometric to a unit
sphere. In the second case, the intersection of the two geodesics contains a $\psi$-orbit, which is dense in $\gamma_1$. Both geodesics are closed, so does their intersection. So as subsets of $M$, we have $\gamma_1\subset\gamma_2$ and furthermore the equality must happen because $\gamma_2$ is a closed connected curve. This is the contradiction ending the proof of the lemma.
\end{proof}

Using above lemmas, we can provide more explicit description for the orbits of prime closed geodesics by the following lemma.
\begin{lemma}\label{lemma-3-5}
Assume $(M,F)=(S^n,F)$ is a Finsler sphere satisfying
$n>1$, $K\equiv 1$, Assumption (F), and that it is not
the standard unit sphere. Then we have the following:
\begin{description}
\item{\rm (1)} There exists closed geodesics
which lengths are irrational multiples of $\pi$.
\item{\rm (2)} For the orbit of prime closed geodesics $\mathcal{B}_i=\hat{G}\cdot\gamma_i$ such that the length of $\gamma_i$ is an irrational multiple of $\pi$, the
    corresponding ${B}_i$ is an orbit for the
    action of $G=I_o(M,F)$.
\item{\rm (3)} Two different orbits of prime closed geodesics, $\mathcal{B}_i$ and $\mathcal{B}_j$, are geometrically distinct
    iff ${B}_i$ and ${B}_j$ do not intersect.
\item{\rm (4)} Two different orbits of prime closed geodesics
$\mathcal{B}_i$ and $\mathcal{B}_j$ are geometrically the same iff we can
find $\gamma_i\in \mathcal{B}_i$ and $\gamma_j\in \mathcal{B}_j$ such that
$\gamma_i$ and $\gamma_j$ are the same curve with different
directions.
\end{description}
\end{lemma}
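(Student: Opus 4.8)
The plan is to treat the four assertions in turn, driving each from the behaviour of the antipodal map $\psi$ along closed geodesics together with Lemmas \ref{lemma-2} and \ref{lemma-3}. The structural fact I would record first, and use throughout, is that $\psi$ leaves every geodesic invariant and shifts it by arclength $\pi$: if $c(s)$ is unit speed then $\psi(c(s))=\mathrm{Exp}_{c(s)}(\pi\dot{c}(s))=c(s+\pi)$, so $\psi(x)$ is the point reached at distance $\pi$ along \emph{any} geodesic through $x$. Hence, on a closed geodesic of length $\ell$, the $\psi$-orbit of a point is finite if $\ell$ is a rational multiple of $\pi$ and dense if it is irrational, which is the dichotomy noted before Lemma \ref{lemma-3}. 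For (1) I would produce an irrational geodesic directly. Since $(M,F)$ is not standard, Lemma \ref{lemma-1} forces $\psi$ to have infinite order, so the identity component $A_0$ of $\overline{\langle\psi\rangle}$ is a positive-dimensional torus of Clifford--Wolf translations contained in $G$, while Lemma \ref{lemma-2} shows the finite-orbit locus $N$ is a proper subset of $M$. Fix $x\notin N$ and a circle subgroup $S^1\subseteq A_0$. A nontrivial Clifford--Wolf translation is fixed-point-free with constant-length generating Killing field $X$, so $F(X)$ is a positive constant and every point is critical for $F(X)$; by Lemma \ref{lemma-killing-1} the orbit $c=S^1\cdot x$ is a closed geodesic (a circle, since $S^1$ cannot fix $x$). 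As $\psi$ shifts $c$ by $\pi$ and $x\notin N$ makes the $\psi$-orbit of $x$ infinite, $c$ has length an irrational multiple of $\pi$.

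For (2), let $\gamma_i$ have irrational length, pick $x\in\gamma_i$, and form the closed geodesic $c=S^1\cdot x$ as above. Because $c$ is a geodesic through $x$, every $\psi^j x=c(j\pi)$ lies on $c$, so the $\psi$-orbit of $x$ is contained in $c$; being also dense in $\gamma_i$, it forces $\gamma_i\subseteq\overline{c}=c\subseteq G\cdot x$. Since a parameter shift does not change the image of a curve, $B_i=\hat{G}\cdot\gamma_i=G\cdot\gamma_i=G\cdot x$ is a single $G$-orbit.

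For (3) and (4) the engine is Lemma \ref{lemma-3}. For (3) I would prove the contrapositive: if $B_i\cap B_j\neq\emptyset$ then $B_i=B_j$. Choose $\gamma_i\in\mathcal{B}_i$ and $\gamma_j\in\mathcal{B}_j$ through a common point $p$. If they are the same curve as sets, then $B_i=\bigcup_{g\in G}g\gamma_i=\bigcup_{g\in G}g\gamma_j=B_j$. Otherwise they are geometrically distinct yet meet, so by Lemma \ref{lemma-3} they are great circles of the totally geodesic round sphere $N'$ of case (3) of Lemma \ref{lemma-2}; here I would use that $G|_{N'}$ is compact and, by Assumption (F) through Lemma \ref{lemma-0}, has only finitely many orbits on the connected space of oriented great circles of $N'$. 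Since each orbit is compact hence closed, the top-dimensional orbit is open and closed, so it is everything: $G|_{N'}$ is transitive on oriented great circles and $B_i=N'=B_j$. Assertion (4) is then the complement of (3): the direction $\Leftarrow$ is exactly the set-equality computation above, and for $\Rightarrow$, given $B_i=B_j$ one takes $x\in B_i$, the geodesic $\gamma_i\in\mathcal{B}_i$ through $x$, and its reversal $\bar\gamma_i$; since $\hat{G}$ preserves direction $\bar\gamma_i\notin\mathcal{B}_i$, while $B_i=B_j$ together with the transitivity/uniqueness just established places $\bar\gamma_i$ in $\mathcal{B}_j$, exhibiting the required pair of oppositely oriented representatives.

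I expect the main obstacle to be the intersection case inside the totally geodesic round sphere $N'$ in (3), where one must upgrade ``finitely many orbits'' to genuine transitivity; the clean resolution is the observation that orbits of the compact group $G|_{N'}$ are closed, so a top-dimensional orbit is clopen in the connected space of oriented great circles and hence fills it. A secondary point needing care, in (1) and (2), is the passage from the dense $\psi$-orbit to an actual circle orbit $S^1\cdot x$ which is a closed geodesic; this rests on Clifford--Wolf translations being fixed-point-free and of constant length, so that Lemma \ref{lemma-killing-1} applies and $S^1\cdot x$ is genuinely one-dimensional.
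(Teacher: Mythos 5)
Your parts (3) and (4) are essentially sound --- your orbit-counting argument (finitely many compact $G$-orbits on the connected space of oriented great circles of the round totally geodesic sphere $N'$, hence exactly one) is a correct expansion of what the paper compresses into ``(3) and (4) follow immediately from Lemma \ref{lemma-3}''. But your mechanism for (1) and (2) contains a genuine error: the identity component $A_0$ of $\overline{\langle\psi\rangle}$ does \emph{not} consist of Clifford--Wolf translations, and the Killing field $X$ generating a circle in $A_0$ does \emph{not} have constant length. Only $\psi$ itself is a Clifford--Wolf translation; its powers and their limits generally have fixed points. Concretely, for a Katok metric on $S^2$ with navigation data $(h,W)$, $W$ an irrational rotation field, $\psi^2$ is the time-$2\pi$ flow of $W$, which fixes the two poles, and $A_0$ is the circle of rotations about the axis: its generator $X$ vanishes at the poles, $F(X(\cdot))$ is non-constant, and $S^1\cdot x$ is a geodesic only for the extremal orbit (the equator). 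The same happens in the paper's Section 6 examples, where $X$ vanishes on the round sphere $B_{k+1}$. So your claims ``$F(X)$ is a positive constant'', ``every point is critical'', and ``$c=S^1\cdot x$ is a closed geodesic'' all fail, and with them your proof of (1) (the circle orbit through a generic $x\notin N$ need not be a geodesic) and of (2) (the containment $\gamma_i\subseteq S^1\cdot x$ has no valid starting point).

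The paper's route avoids this. Since $\psi$ shifts every closed geodesic along itself, $\overline{\langle\psi\rangle}$ preserves each closed geodesic, so $X$ is tangent to each of them; moreover $X$ vanishes on every closed geodesic of length in $\pi\mathbb{Q}$ (the restricted group there is finite, so the identity component restricts trivially). For (1) one then takes a \emph{maximum} point of $f(\cdot)=F(X(\cdot))$ and applies Lemma \ref{lemma-killing-1} at that critical point: the integral curve there is a closed geodesic on which $X\neq 0$, hence of length an irrational multiple of $\pi$. For (2), one uses $X|_{\gamma_i}=\rho\,\dot{\gamma}_i$ with $\rho\neq 0$ (Lemma \ref{lemma-killing-2}), so the flow $\exp(tX)\subseteq G$ is already transitive on the image of $\gamma_i$ and absorbs the $T$-action, giving $B_i=G\cdot x$; no claim about generic circle orbits is needed. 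A secondary slip: in (4), your assertion that ``$\hat G$ preserves direction, so $\bar\gamma_i\notin\mathcal{B}_i$'' is false in general --- isometries can reverse a closed geodesic, e.g.\ inside $N'$ or on the standard sphere. What you actually need, and what does hold, is that if some $\gamma_j\in\mathcal{B}_j$ had the same image and the same direction as $\gamma_i$, then $\gamma_j$ would be a parameter shift of $\gamma_i$ and $\mathcal{B}_i=\mathcal{B}_j$; so for different but geometrically identical orbits, the representatives through a common point must carry opposite directions. With the false Clifford--Wolf step in (1)--(2) replaced by the critical-point and tangency arguments above, your outline matches the paper's proof.
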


\begin{proof}
By Lemma \ref{lemma-1} and the assumption that $(M,F)$
is not the standard unit sphere, the antipodal map $\psi$
generates an infinite subgroup in $I(M,F)$, which
closure is a subgroup in the center of $I(M,F)$, corresponding
to an Abelian subalgebra $\mathfrak{c}'\subset\mathfrak{c}(\mathfrak{g})$ with $\dim\mathfrak{c}'>0$. We can find a nonzero Killing vector field $X$ from $\mathfrak{c}'$ which generates an $S^1$. Obviously, $X$ is tangent to each closed geodesic. The restriction of $X$ to each closed geodesic
which length is a rational multiple of $\pi$ is zero.

To prove (1), we only need to consider a maximum point $x$ of
$f(\cdot)=F(X(\cdot))$. By Lemma \ref{lemma-killing-1}, the integration curve $\gamma$ of $X$ passing $x$ is a geodesic, restricted
to which $X$ is nonzero. Because $X$ generates an $S^1$,
$\gamma$ is closed. So it is a closed geodesic which length
is an irrational multiple of $\pi$.

To prove (2), we consider a prime closed geodesic $\gamma_i$
which length is an irrational multiple of $\pi$. Because the
restriction of $X$ to $\gamma_i$ is a nonzero tangent vector
field, $\gamma_i$ is a homogeneous geodesic. In its $\hat{G}=G\times T$-orbit, The $T$-action on $\gamma_i$ can
be replaced by the actions of $\exp(tX)\in G$. So the union
${B}_i$ for the geodesics in $\mathcal{B}_i$ is a $G$-orbit.

The statements (3) and (4) follows immediately Lemma \ref{lemma-3}.
\end{proof}

Finally, we end this section with the following corollary.

\begin{corollary}\label{cor-1}
Assume $(M,F)=(S^n,F)$ is a homogeneous Finsler sphere satisfying $n>1$, $K\equiv 1$ and Assumption (F). Then
all closed geodesics are reversible. Furthermore,
one of the following two cases must happen:
\begin{description}
\item{\rm (1)} $(M,F)$ is a standard unit sphere. It has exactly one orbit of prime closed geodesics and all geodesics are closed.
\item{\rm (2)} The dimension $n$ of $(M,F)$ is odd. There exists exactly two orbits of prime closed
geodesics $\hat{G}\cdot\gamma_1$ and $\hat{G}\cdot\gamma_2$,
in which $\gamma_1$ and $\gamma_2$ are the same curve with different directions.
\end{description}
\end{corollary}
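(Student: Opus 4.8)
The plan is to split along the dichotomy of whether $(M,F)$ is the standard unit sphere. If it is, we are in case (1): the geodesics are the great circles, which are closed and reversible, and the isometry group (containing $O(n+1)$) acts transitively on them, giving a single $\hat{G}$-orbit. So the substance lies in the case where $(M,F)$ is \emph{not} the standard unit sphere, where I will produce case (2). Here I would first borrow the setup from the proof of Lemma~\ref{lemma-3-5}: by Lemma~\ref{lemma-1} the antipodal map $\psi$ has infinite order, its closure yields a positive-dimensional Abelian subalgebra $\mathfrak{c}'\subset\mathfrak{c}(\mathfrak{g})$ in the center, and I pick a nonzero $X\in\mathfrak{c}'$ generating an $S^1$. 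By Lemma~\ref{lemma-3-5}(2) the underlying set $B_i$ of any prime closed geodesic $\gamma_i$ of irrational length is a $G=I_o(M,F)$-orbit, hence by homogeneity $B_i=M$.

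The key step is to upgrade this single homogeneous geodesic to a foliation. Because $X$ lies in the center $\mathfrak{c}(\mathfrak{g})$, it is $G$-invariant, i.e. $g_*X=\mathrm{Ad}(g)X=X$ for all $g\in G$, so the function $f(\cdot)=F(X(\cdot))$ is $G$-invariant; transitivity then forces $f\equiv c$ for a constant $c>0$. Thus \emph{every} point is a critical point of $f$ with $f>0$, and Lemma~\ref{lemma-killing-1} shows that every integral curve of $X$ is a geodesic; since $X$ generates $S^1$ these are closed geodesics of one common length, which is irrational by Lemma~\ref{lemma-2} (as $X$ vanishes on every rational-length geodesic), and they partition $M$ into a foliation by circles. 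In particular $X$ is a nowhere-vanishing vector field on $S^n$, which by the hairy ball theorem forces $n$ to be odd. Applying the identical argument to the central Killing field $-X$, whose integral curves are the reverses of those of $X$, shows each reversed curve is again a constant-speed geodesic; hence all of these geodesics are reversible.

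It remains to count the orbits. Since $B_i=M$, any orbit $\mathcal{B}_j$ of prime closed geodesics meets $B_i$, so by Lemma~\ref{lemma-3-5}(3) it cannot be geometrically distinct from $\mathcal{B}_i$; by Lemma~\ref{lemma-3-5}(4) it must therefore consist of the same curves traversed in the opposite direction. Writing $\gamma_i^-(t)=\gamma_i(-t)=\exp(-tX)\cdot x$ for the reversed geodesic, a short computation tracing through $\hat{G}=G\times T$ shows that every such $\mathcal{B}_j$ equals either $\hat{G}\cdot\gamma_i$ or $\hat{G}\cdot\gamma_i^-$. Finally I would show these two orbits are genuinely different: if $\gamma_i^-=(g,t')\cdot\gamma_i$ for some $(g,t')\in\hat{G}$, then comparing velocities at a common point and using $g_*X=X$ gives $-X=X$ there, contradicting $X\neq 0$. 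Hence there are exactly two orbits, and since $\gamma_i,\gamma_i^-$ are the same curve with opposite directions we land precisely in case (2).

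The main obstacle is the middle step: passing from the existence of one homogeneous closed geodesic to the statement that the \emph{whole} manifold is foliated by closed geodesics of $X$. This hinges on the constancy of $f=F(X(\cdot))$, which is exactly where centrality of $\mathfrak{c}'$ and homogeneity combine; once $f$ is constant, Lemma~\ref{lemma-killing-1} does the rest, and both the parity of $n$ and the reversibility then follow cleanly. The two-versus-one orbit dichotomy is settled by the elementary velocity-mismatch argument, which is also what distinguishes the non-standard case from the round sphere, where reversing isometries collapse the two orbits into one.
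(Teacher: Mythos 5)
Your proposal is correct, but it takes a noticeably more self-contained route than the paper, whose entire proof is two sentences: everything except the parity claim is said to follow immediately from Lemma \ref{lemma-3-5}, and the oddness of $n$ is deduced from the Lie-theoretic fact that a compact group acting transitively and effectively on a sphere with a center of positive dimension forces the dimension to be odd (a classification fact about transitive actions on spheres). You instead get oddness from the hairy ball theorem: centrality of $X\in\mathfrak{c}'$ gives $g_*X=X$, so $f=F(X(\cdot))$ is $G$-invariant, hence constant and positive by transitivity, so $X$ is nowhere zero --- more elementary, and it simultaneously yields via Lemma \ref{lemma-killing-1} the foliation of $M$ by closed geodesic integral curves, a mechanism the paper leaves implicit here (it is the same device used in the proofs of Lemma \ref{lemma-1} and of Claim 1 in Section 5). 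Your explicit $-X$ argument for reversibility and the velocity-mismatch argument separating $\hat{G}\cdot\gamma_i$ from $\hat{G}\cdot\gamma_i^{-}$ also supply the ``at least two orbits'' half of the count, which a bare citation of Lemma \ref{lemma-3-5}(3)(4) does not by itself deliver (those items only constrain how distinct orbits can relate, and in a non-reversible Finsler metric the reverse of a geodesic is not automatically a geodesic, so the existence of the second orbit genuinely needs your $-X$ step). One small overstatement: your assertion that the integral circles of $X$ have ``one common length'' is not immediate, since a priori the $S^1$-flow could have points with nontrivial isotropy and hence shorter prime period; this is harmless because nothing downstream uses it, and it can be repaired in one line --- $G$ commutes with the flow of $X$ (again by $g_*X=X$), so transitivity of $G$ forces the flow-isotropy, hence the prime period and the length, to agree at all points.
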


Almost all the statement follows immediately Lemma \ref{lemma-3-5}. For the case (2) in Corollary \ref{cor-1},
the dimension $n$ of $(M,F)$ is odd because $G=I_o(M,F)$
acts transitively and effectively on $M$ and it has a center of positive dimension. Until now, the only know examples for this case are non-Riemannian Randers spheres $(S^{2n'-1},F)$ in which $n'>1$,
and $F$ is defined by the navigation datum $(h,W)$, where $h$
is the standard unit sphere metric, and $W$ is Killing
vector field such that its length function is constantly an
irrational number.

\section{Isometries preserving each closed geodesic}

Assume $(M,F)=(S^n,F)$ is a Finsler sphere satisfying $n>1$,
$K\equiv 1$, and Assumption (F). Let $\psi$ be its antipodal
map. By Lemma \ref{lemma-1}, the case that $\psi$ has a finite order
is easy, so in the following discussion we assume that
$\psi$ has an infinite order.

Denote $H$ the subgroup of $G=I(M,F)$ which preserves
each closed geodesic, $H_o$ its identity component, and $\mathfrak{h}$ its Lie algebra.
The group $H$ is intersection of
$$G_\gamma=\{g\in G|(g\cdot \gamma)(t)\equiv\gamma(t+t_0) \mbox{ for some }t_0\}$$
for all closed geodesics $\gamma$. Each $G_\gamma$ is
a closed subgroup of $G$. Then so does $H$.

It should be remarked that the claim that $G_\gamma$ is
a closed subgroup of $G$ is
an easy fact in this case because
$\gamma$ is closed. In the recent work \cite{BN2018}, it has been proved that $G_\gamma$ is still a Lie group when $\gamma$ is not closed.

Obviously the antipodal map $\psi$ belongs to $H$. Because
$\psi$ has an infinite order, then after taking closure, it generates an Abelian subgroup of positive dimension, i.e.
we have $\dim H>0$. The following lemma claims that $H_o$ commutes
with all the $G$-actions.

\begin{lemma}\label{lemma-4} The subgroup
$H_o$ is a closed subgroup in the center of $G=I_o(M,F)$.
\end{lemma}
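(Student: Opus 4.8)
The plan is to show two things: first that $H_o$ lies in the center of $G$, and second that it is closed (the latter is already essentially known, since $H$ is closed as an intersection of the closed subgroups $G_\gamma$, so $H_o$ is closed as the identity component of a closed subgroup). The heart of the matter is the centrality. To prove it, I would fix an arbitrary element $h\in H_o$ and an arbitrary $g\in G$, and aim to show $ghg^{-1}=h$, equivalently that $g$ conjugates $H_o$ into itself trivially. The natural strategy is to pass to the Lie algebra level and show that $\mathfrak{h}$ is fixed pointwise under the adjoint action $\mathrm{Ad}(g)$ for every $g\in G$, since $G$ is connected and generated by its one-parameter subgroups.

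First I would exploit the defining property of $H$ together with the structural results of Section 3. Take any Killing field $Y\in\mathfrak{h}$. Because $H_o$ preserves each closed geodesic and acts on it by a shift of parameter, the restriction of $Y$ to any closed geodesic $\gamma$ is tangent to $\gamma$; on geodesics whose length is an irrational multiple of $\pi$, Lemma \ref{lemma-3-5}(2) identifies $B_i$ with a single $G$-orbit on which the flow of $Y$ agrees with a reparametrization coming from the center (the antipodal field $X\in\mathfrak{c}'$), so $Y$ and $X$ have the same integral curves there. The key point is that these closed geodesics with irrational length are abundant: by Lemma \ref{lemma-3-5}(1) they exist, and in fact (since $\psi$ has infinite order, which is the standing assumption) they fill up enough of $M$ that a Killing field tangent to all of them is forced to lie in the centralizer. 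Concretely, I would argue that $\mathrm{Ad}(g)Y$ again preserves every closed geodesic — because $g\in G$ maps closed geodesics to closed geodesics and $H=\bigcap_\gamma G_\gamma$ is manifestly normalized by $G$ — so $\mathrm{Ad}(g)\mathfrak{h}=\mathfrak{h}$, i.e. $H_o$ is normal in $G$.

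Having established normality, I would upgrade it to centrality. The cleanest route is to use that $H_o$ contains the closure of the group generated by $\psi$, which is a positive-dimensional torus $S\subset Z(I(M,F))$ in the center, as recalled in Section 2 and used in Lemma \ref{lemma-3-5}. Since $Y\in\mathfrak{h}$ is tangent to every closed geodesic and, on each $G$-orbit arising as some $B_i$ of irrational length, coincides (up to scale) with a central field, I would show that the Killing field $[\,Z,Y\,]$ vanishes along all such geodesics for every $Z\in\mathfrak{g}$, where $[\cdot,\cdot]$ is the bracket of Killing fields. A Killing field on $(M,F)$ vanishing on a set that meets every $G$-orbit of this type, hence on a set not contained in any proper totally geodesic submanifold, must vanish identically by the rigidity in Lemma \ref{lemma-a1-3-5} (a fixed-point set of positive codimension cannot be all of $M$). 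Therefore $[\mathfrak{g},\mathfrak{h}]=0$, and since $G$ is connected this gives $H_o\subset Z(G)$.

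The main obstacle I expect is the passage from \emph{normality} to \emph{centrality}: showing that $\mathrm{Ad}(g)$ acts trivially on $\mathfrak{h}$, not merely preserving it. Knowing that each $Y\in\mathfrak{h}$ is tangent to every closed geodesic constrains $Y$ strongly but does not immediately pin down $\mathrm{Ad}(g)Y=Y$; the delicate step is to verify that the integral curves of $Y$ and of $\mathrm{Ad}(g)Y$ coincide as \emph{oriented} parametrized geodesics, which requires controlling the parameter-shift $t_0$ in the definition of $G_\gamma$ uniformly and using that the flow on each irrational-length geodesic is, up to the central reparametrization, rigid. I would handle this by reducing to the central torus $S$: since $S$ is central and $Y$ is tangent to the same geodesics as the central field $X$, comparing the two flows via Lemma \ref{lemma-killing-2} (the conserved quantity $\langle Y(c(t)),\dot c(t)\rangle^F_{\dot c(t)}$) should force $Y$ itself to be central, closing the argument.
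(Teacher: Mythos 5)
Your closedness and normality steps are sound and agree with the paper: $H=\bigcap_\gamma G_\gamma$ is closed, and since $G$ permutes the closed geodesics in each orbit, $\mathrm{Ad}(g)\mathfrak{h}=\mathfrak{h}$, so $\mathfrak{h}$ is an ideal of $\mathfrak{g}$. The gap is in your upgrade from normality to centrality. Your pivotal claim is that $[Z,Y]$ vanishes along every closed geodesic of irrational length, for all $Z\in\mathfrak{g}$ and $Y\in\mathfrak{h}$. Note first what is available without centrality: along a single closed geodesic $c$, Lemma \ref{lemma-killing-2} gives $Y|_{c}=\rho\,\dot{c}$ with $\rho$ constant along $c$, but with $\rho$ a priori depending on the geodesic. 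Now deform $c$ by the flow $\exp(sZ)$ to get closed geodesics $c_s$ in the same orbit with $Y|_{c_s}=\rho(s)\dot{c}_s$; a direct computation then shows $[Z,Y]|_{c}=\rho'(0)\,\dot{c}$. So your claim is precisely the statement that the coefficient $\rho_{Y,i}$ is constant over the whole orbit $\mathcal{B}_i$ — that is, Lemma \ref{lemma-5}, which the paper deduces \emph{from} Lemma \ref{lemma-4}. Your appeal to the central field $X\in\mathfrak{c}'$ only yields proportionality to $\dot{c}$ along each individual geodesic, with a ratio that may vary across the orbit, and your closing sentence ("should force $Y$ itself to be central") acknowledges rather than fills this hole; as written the argument is circular.

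There is a second, independent gap: even granting that $[Z,Y]$ vanishes on all irrational-length closed geodesics, your rigidity step fails. Under Assumption (F) the union of all closed geodesics is a finite union of sets $B_i$ and can perfectly well be contained in proper totally geodesic sub-manifolds — in the Katok examples of Section 6 it lies in $B_1\cup\cdots\cup B_{k+1}$, each a proper sub-sphere, and in the finitely-many-geodesics case it is a finite union of circles — so the assertion that these geodesics "fill up enough of $M$" is false in general, and Lemma \ref{lemma-a1-3-5} does not let you conclude that a Killing field vanishing there vanishes identically. The paper's route avoids both problems: since $G$ is compact, it suffices to show $\mathfrak{h}$ is an \emph{abelian} ideal (in a compact Lie algebra every abelian ideal is central, so one never needs $[\mathfrak{g},\mathfrak{h}]=0$ directly). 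Abelianness is proved by contradiction: the restriction of $\mathfrak{h}$ to each closed geodesic consists of constant multiples of $\dot{c}$, hence $[\mathfrak{h},\mathfrak{h}]$ restricts to zero on every closed geodesic; if $[\mathfrak{h},\mathfrak{h}]\neq 0$, one picks a circle-generating $X$ in this compact semi-simple algebra and applies Lemma \ref{lemma-killing-1} at a maximum point of $f(\cdot)=F(X(\cdot))$ to produce a closed geodesic on which $X$ is nonzero — a contradiction. That max-point argument is the key idea your proposal is missing.
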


\begin{proof} The previous observations have already proved that $H_o$ is a closed subgroup of $G$.
Because $G$ is a compact Lie group, to prove this lemma we only need to prove $\mathfrak{h}=\mathrm{Lie}(G)$ is
an Abelian ideal of $\mathfrak{g}$.

The Lie algebra
$\mathfrak{h}=\mathrm{Lie}(H)$ consists of all the
Killing vector fields $X$ which is tangent to each closed geodesic. Because the action of $G$ permutes the closed geodesics in each orbit of prime closed geodesics, any Killing vector field of the form $\mathrm{Ad}(g)X$ for $g\in G$ and $X\in\mathfrak{h}$ is also tangent to each closed geodesic. So
conjugations of $G$ preserves $\mathfrak{h}$,
i.e. $\mathfrak{h}$
is an ideal of $\mathfrak{g}$.

Then we prove $\mathfrak{h}$ is
Abelian by contradiction. Assume conversely that $\mathfrak{h}$ is not Abelian, then we can find a nonzero vector $X$ from the compact semi-simple Lie algebra $[\mathfrak{h},\mathfrak{h}]$ which generates an $S^1$-subgroup. The Killing
vector field on $(M,F)$ induced by $X$ has trivial restriction on each closed geodesic. By Lemma \ref{lemma-killing-1}, the integration curve of $X$ passing the maximum point of $f(\cdot)=F(X(\cdot))$ is
a closed geodesic. This is a contradiction which ends the proof
of this lemma.
\end{proof}

A direct consequence of Lemma \ref{lemma-4} is the following
lemma.

\begin{lemma}\label{lemma-5}
For any Killing vector field $X\in\mathfrak{h}$ and any
orbit $\mathcal{B}_i$ of the prime closed geodesic $c=c(t)$, their exists a constant
$\rho_{X,i}\in\mathbb{R}$ such that
\begin{equation}\label{001}
X|_{c(t)}\equiv\rho_{X,i}\dot{c}(t),\quad\forall c\in \mathcal{B}_i.
\end{equation}
\end{lemma}

In particular, if a Killing vector field $X\in\mathfrak{h}$ vanishes at some point $x\in\mathcal{B}_i$, iff $\rho_{X,i}=0$, i.e.
$X$ vanishes identically on ${B}_i$.

The last ingredient for the proof of Theorem \ref{main-thm}
is the following lemma.

\begin{lemma}\label{lemma-6}
Let $(M,F)=(S^n,F)$ be a Finsler sphere satisfying $n>1$, $K\equiv 1$ and Assumption (F). Then we have the following:
\begin{description}
\item{\rm (1)} For any nonzero Killing vector field $X\in\mathfrak{h}$ which generates an $S^1$, there exists some orbit $\mathcal{B}_i$ of prime closed geodesics, such that  $\rho_{X,i}>0$.
\item{\rm (2)} Any Killing vector field $X\in\mathfrak{h}$ vanishing on
    all closed geodesics must be a zero vector field.
\item{\rm (3)}
The common zero set of all Killing vector fields in
$\mathfrak{h}$ must be the fixed point set of $\psi^k$ for some integer $k$. To be more precise, it is empty, a two-points $\psi$-orbit, some ${B}_i$ which is
a reversible closed geodesic which lengths for both directions
 are rational multiples of $\pi$, or a totally geodesic sub-manifold isometric to a standard unit sphere.
 \end{description}
\end{lemma}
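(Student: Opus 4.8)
The plan is to handle the three parts in order, proving (1) and (2) by a maximum-point analysis of the length function of a Killing field, and (3) by feeding the fixed-point classification of Lemma~\ref{lemma-a1-3-5} through the rational/irrational dichotomy for geodesic lengths.

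For (1), I would start from a nonzero $X\in\mathfrak{h}$ generating an $S^1$ and set $f(\cdot)=F(X(\cdot))$. Since $X\not\equiv 0$ and $M$ is compact, $f$ attains a positive maximum at some $x$; because the flow $\exp(tX)$ consists of isometries preserving $X$, the function $f$ is constant along that flow, so $f>0$ along the whole integral curve $\gamma$ of $X$ through $x$. By Lemma~\ref{lemma-killing-1}, $\gamma$ is a geodesic, and as $X$ generates $S^1$ the integral curve is periodic, hence $\gamma$ is a closed geodesic with $X|_\gamma=\dot\gamma\neq 0$. Taking $\mathcal{B}_i$ to be the orbit of the prime closed geodesic underlying $\gamma$, oriented in the flow direction of $X$, Lemma~\ref{lemma-5} gives $X|_\gamma=\rho_{X,i}\dot\gamma$ with $\rho_{X,i}>0$, since $\dot\gamma$ is exactly the positive-speed velocity of the integral curve.

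For (2), the naive idea of approximating $X$ by circle generators fails: the functionals $X\mapsto\rho_{X,i}$ can all vanish on a whole line even though every circle generator is detected by some orbit, so passing to a limit only yields $\rho_{X,i}\ge 0$. This is the step I expect to be the main obstacle, and it is resolved by a compactness argument that genuinely uses Assumption~(F). Arguing by contradiction, suppose $X\in\mathfrak{h}$, $X\neq 0$, vanishes on every closed geodesic; pick circle-generating $Y_n\in\mathfrak{h}$ with $Y_n\to X$. Applying the construction of (1) to each $Y_n$ produces a closed geodesic $\gamma_n$ through a maximum point $x_n$ of $f_n=F(Y_n(\cdot))$, lying in some orbit $\mathcal{B}_{i_n}$. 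Since there are only finitely many orbits, a subsequence has $i_n\equiv i_0$, so every $x_n$ lies in the fixed compact set $B_{i_0}$; by compactness $x_n\to x_*\in B_{i_0}$. As $Y_n\to X$ forces $f_n\to f=F(X(\cdot))$ uniformly, the maxima converge and $f(x_*)=\max f>0$, whence $X(x_*)\neq 0$ at a point of $B_{i_0}$. By the remark following Lemma~\ref{lemma-5} this forces $\rho_{X,i_0}\neq 0$, i.e. $X$ is nonzero on the closed geodesics of $\mathcal{B}_{i_0}$, contradicting the hypothesis; hence $X\equiv 0$.

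For (3), write $Z$ for the common zero set of $\mathfrak{h}$, which is precisely $\mathrm{Fix}(H_o)$; it is a closed totally geodesic submanifold, it is $G$-invariant since $H_o$ commutes with $G$ by Lemma~\ref{lemma-4}, and it is $\psi$-invariant since $\psi$ is central. By Lemma~\ref{lemma-a1-3-5}, $Z$ is either empty, a two-point $\psi$-orbit, a reversible closed geodesic, or a Finsler sphere with $\dim>1$ and $K\equiv1$. The decisive refinement is that any closed geodesic contained in $Z$ has rational length: the abelian subalgebra $\mathfrak{c}'$ produced from $\psi$ in the proof of Lemma~\ref{lemma-3-5} satisfies $\mathfrak{c}'\subseteq\mathfrak{h}$, and every irrational-length closed geodesic carries a nonzero tangent field from $\mathfrak{c}'$, so it cannot lie in $Z=\mathrm{Fix}(H_o)$. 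In the one-dimensional case this yields the stated reversible geodesic whose lengths in both directions are rational multiples of $\pi$. In the case $\dim Z>1$, Lemma~\ref{lemma-0} shows $(Z,F|_Z)$ still satisfies Assumption~(F); if it were not a standard unit sphere, Lemma~\ref{lemma-3-5}(1) would produce an irrational-length closed geodesic inside $Z$, a contradiction, so $Z$ is a standard unit sphere (with $k=2$, as in Lemma~\ref{lemma-2}).

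It remains to match $Z$ with $\mathrm{Fix}(\psi^k)$. Since $\overline{\langle\psi\rangle}\subseteq H$ and $\psi^k\in\overline{\langle\psi\rangle}_{o}\subseteq H_o$ once $k$ is a multiple of the number of components of $\overline{\langle\psi\rangle}$, we get $Z\subseteq\mathrm{Fix}(\psi^k)$, and comparing the geometric type of $Z$ against the nested family $\mathrm{Fix}(\psi^k)\subseteq\mathrm{Fix}(\psi^{k_0})$ of Lemma~\ref{lemma-2} selects a suitable $k$. The delicate bookkeeping, which I regard as the secondary obstacle alongside the compactness step in (2), is that when $H_o$ rotates a rational-length geodesic or sphere inside $\mathrm{Fix}(\psi^{k_0})$, the fixed set $Z$ may drop to a proper subset or even become empty, so $k$ has to be chosen case by case rather than uniformly.
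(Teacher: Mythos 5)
Your proposal is correct, and it reproduces the paper's proof of part (1) essentially verbatim (maximum point of $f(\cdot)=F(X(\cdot))$, Lemma \ref{lemma-killing-1}, periodicity from the $S^1$-generation), but in part (2) you take a genuinely different route. The paper never approximates an arbitrary $X$ by circle generators: it observes that the space $\mathfrak{k}\subseteq\mathfrak{h}$ of Killing fields vanishing on all closed geodesics is the Lie algebra of the closed subgroup of $H_o$ fixing every closed geodesic pointwise, hence corresponds to a subtorus of $H_o$; if $\mathfrak{k}\neq 0$ one picks a nonzero circle generator inside $\mathfrak{k}$ itself, and part (1) applied to that element is an immediate contradiction. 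Your substitute --- density of circle generators $Y_n\to X$ in the torus algebra $\mathfrak{h}$, a constant subsequence $i_n\equiv i_0$ extracted via the finiteness of orbits in Assumption (F), compactness of $B_{i_0}$, uniform convergence $f_n\to f$ so that $f(x_*)=\max f>0$ at $x_*\in B_{i_0}$, and then the rigidity remark after Lemma \ref{lemma-5} --- is valid, and you correctly diagnose why the naive limit of the functionals $\rho_{\cdot,i}$ only yields $\rho_{X,i}\geq 0$; but it is heavier machinery than needed, since the subtorus observation eliminates the approximation entirely (at the cost of needing $\mathfrak{k}$ to exponentiate to a closed subgroup, which your version avoids). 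In part (3) your skeleton matches the paper's (identify $Z=\mathrm{Fix}(H_o)$, classify via Lemma \ref{lemma-a1-3-5}, inherit Assumption (F) via Lemma \ref{lemma-0}), but the refinements differ: the paper uses $\psi\in H$ and the finiteness of $H/H_o$ to place $\psi^k$ in $H_o$, so every $\psi$-orbit in $Z$ is finite, giving rationality of both lengths when $\dim Z=1$ and, when $\dim Z>1$, a finite-order antipodal map $\psi|_Z$ to which Lemma \ref{lemma-1} applies directly; you instead argue that $\mathfrak{c}'\subseteq\mathfrak{h}$ and that every irrational-length closed geodesic carries a nonzero tangent field from $\mathfrak{c}'$ (so cannot lie in $Z$), then invoke the contrapositive of Lemma \ref{lemma-3-5}(1) for the sphere case --- equally sound, and it isolates the same phenomenon from the Killing-field side rather than the $\psi$-orbit side. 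As for identifying $Z$ with $\mathrm{Fix}(\psi^k)$: the inclusion $Z\subseteq\mathrm{Fix}(\psi^k)$ you prove is in effect all the paper establishes too (its proof only derives the classification list and leaves the $\mathrm{Fix}(\psi^k)$ clause implicit), so your candid flag of ``delicate bookkeeping'' there is not a gap relative to the paper's own argument.
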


\begin{proof}
(1) We consider the maximum point
$x$ for the function $f(\cdot)=F(X(\cdot))$. By Lemma
\ref{lemma-killing-1}, the integration curve of $X$ passing
$x$ provide a prime closed geodesic $\gamma$, for which
we have $X(c(t))\equiv \rho_{X,\gamma}\dot{c}(t)$ with $\rho_{X,\gamma}>0$.

(2) We assume conversely that there exists a non zero Killing vector field on $(M,F)$ such that it vanishes
on all closed geodesics.
Let $\mathfrak{k}$ be the space of all such Killing vector
fields. It is a subalgebra of $\mathfrak{h}$ corresponding
to a sub-torus in $H_o$. We can find a nonzero Killing vector
field $X$ from $\mathfrak{k}$ which generates an $S^1$. The argument for (1) indicates $X$ is not vanishing on some closed geodesic, which is the contradiction.

(3) Let $N$ be the fixed point set of $H_o$, and assume $N$ is not empty. By Lemma \ref{lemma-a1-3-5}, $N$ must be a
two-points $\psi$-orbit, a reversible closed geodesic, or
a Finsler sphere with $\dim N>1$, $K\equiv 1$ isometrically imbedded in $(M,F)$.

Obviously the action of $\psi$ preserves
$N$, i.e. $N$ consists of $\psi$-orbits.
Because $H$ is compact, $H/H_o$ is finite. We also have $\psi\in H$, and thus
each $\psi$-orbit in $N$ is finite. So when $\dim N=1$, the
lengths of $N$ for both directions are rational multiples of
$\pi$.

When $\dim N>1$, we see $(N,F|_N)$ satisfies Assumption (F)
by Lemma \ref{lemma-0}. Then Lemma \ref{lemma-1} tells us
that $(N,F|_N)$ is a standard unit sphere.
\end{proof}

\section{Proof of Theorem \ref{main-thm}}

Now we are ready to prove Theorem \ref{main-thm}, which applies a similar inductive argument
as that for Theorem 1.2 in \cite{Xu2018}.

When $\psi$ has a finite order, then by Lemma \ref{lemma-1},
$(M,F)$ is the standard unit sphere. Obviously Theorem \ref{main-thm} is valid in this case. So in the following discussion, we assume $\psi$ has an infinite order, and
thus we have $m=\dim H>0$.

We will prove Theorem \ref{main-thm} by an induction for
$n=\dim M$.

When $n=2$ and the antipodal map $\psi$ has an infinite order, $H_o$ coincides with $G=I_o(M,F)=S^1$. In \cite{BFIMZ2017}, it has been proved that geometrically there exists exactly one reversible closed geodesic $\gamma$ with a non-trivial $H_o$-action. So Theorem \ref{main-thm} is valid in this case, and the estimate
is sharp.

Now we assume Theorem \ref{main-thm} is valid when $n<l$ with $l>3$ (the inductive assumption) and we will prove the theorem when $n=l$.

Firstly, we prove

{\bf Claim 1}: When $\dim H=1$, there exists at least one totally geodesic
${B}_i$ with a non-trivial $H_o$-action.

Let $X$ be any nonzero Killing vector field from $\mathfrak{h}=\mathrm{Lie}(H)$. We list
all the $\hat{G}$-orbits of prime closed geodesics
as $\mathcal{B}_i$ with $1\leq i\leq k$,
such that when $1\leq i\leq k'$
the coefficient $\rho_{X,i}$ in (\ref{001})
is positive. Notice that by Lemma \ref{lemma-6} (1), we have $k'>0$.

If the antipodal map $\psi$ is not contained in $H_o$,
we can find an isometry of $(M,F)$ which is
of the form $\phi=\psi\exp (t'X)$ such that its fixed point set
contains ${B}_1$. By Lemma \ref{lemma-a1-3-5} (or see Lemma 3.5 in \cite{Xu2018}),
the fixed point set $N$ of $\phi$ is a closed connected totally geodesic sub-manifold. It must have a positive co-dimension in $M$ because $\phi\notin H_o$. When $\dim N=1$, it is
a reversible closed geodesic. When $\dim N>1$, by Lemma \ref{lemma-0} and the totally geodesic property, $(N,F|_N)$ is
a Finsler sphere satisfying $K\equiv1$ and Assumption (F). Using the inductive
assumption, we can find some orbit of prime closed geodesic, $\mathcal{B}_i=\hat{G}'\cdot\gamma_i=\hat{G}\cdot\gamma_i$, where $\hat{G}'=G'\times T$ and $G'=I_o(N,F|_N)$, such that the corresponding $\mathcal{B}'_i$, is totally geodesic in $(N,F|_N)$ as well as in $(M,F)$. The $H_o$-action on ${B}_i$ is non-trivial because
$$\exp(t'X)|_{{B}_i}=\psi^{-1}\phi|_{{B}_i}
=\psi^{-1}|_{{B}_i},$$
and $\psi$ has no fixed point on any closed geodesic.

To summarize, this proves Claim 1 when $\psi\notin H_o$.

To continue the proof of Claim 1, we may assume $\psi\in H_o$.
In this case, we can prove the zero set of $X$ is empty as following. Assume conversely that the zero set of $X$ is
not empty, by Lemma \ref{lemma-6}, it is a two-points $\psi$-orbit, a reversible closed geodesic, or a connected totally geodesic standard unit sphere. For each possibility, $\psi$ can not
be generated by $X$, which is a contradiction to the assumption
$\psi\in H_o$. This fact implies that $f(\cdot)=F(X(\cdot))$
is a smooth function on $M$. By Lemma \ref{lemma-killing-1}, the critical point set of $f(\cdot)$
consists of exactly all ${B}_i$'s with $1\leq i\leq k'$. Meanwhile, we see the $H_o$-action on each closed geodesic
is non-trivial.

We take a prime closed geodesic $c_i(t)$ with $t\in \mathbb{R}/\mathbb{Z}$ from $\mathcal{B}_i$ for $1\leq i\leq k'$,
then $X|_{c_i}=\rho_{X,i}\dot{c}_i$ with $\rho_{X,i}>0$.
Because $H_o=S^1$, we can find some $t'>0$ such that
$\exp(t'X)=\mathrm{id}$, then we have
$$n_i=t'\rho_{X,i}\in\mathbb{N},\quad \forall 1\leq i\leq k'.$$
We may re-order these $c_i$'s such that $$n_1\leq n_2\leq \cdots\leq n_{k'}.$$
There are two possibilities, all $n_i$'s are not all the same, or
all $n_i$'s are all the same.

Assume all $n_i$'s are not all the same, i.e. $n_1<n_{k'}$. The fixed
point set $N$ of the isometry $\phi=\exp((t'/n_{k'})X)\in H_o$ contains ${B}_{k'}$ but not ${B}_1$.
It is either a reversible closed geodesic, or a Finsler sphere satisfying $1<\dim N<\dim M$, $K\equiv1$ and Assumption (F). Applying the inductive assumption and Lemma \ref{lemma-0},
we can find a totally geodesic ${B}_i$ for $(N,F|_N)$,
as well as for $(M,F)$.

Assume all $n_i$'s are all the same, then all $\rho_{X,i}$'s are all the same as well. We may choose a suitable $t'$ such that $n_i=1$ for $1\leq i\leq k'$. There exists $t''\in(0,1)$ such that $\psi(c_i(0))=c_i(t'')$, i.e. $d_F(c_i(0),c_i(t''))=\pi$, for $1\leq i\leq k'$. Then we have
$$F(X|_{c_1})=F(X|_{c_2})=\cdots=F(X|_{c_{k'}}).$$
The function $f(\cdot)=F(X(\cdot))$ takes the same value on its critical
point set, so it is a constant function. By Lemma \ref{lemma-killing-1},
all integration curves of $X$ are closed geodesics, which belongs to one $\hat{G}$-orbit. By Corollary \ref{cor-1}, $(M,F)$ is a non-Riemannian homogeneous Finsler sphere with $K\equiv 1$ and exactly two $\hat{G}$-orbits of prime closed
geodesics, $\mathcal{B}_1=\hat{G}\cdot\gamma_1$ and $\mathcal{B}_2=\hat{G}\cdot\gamma_2$ such that $\gamma_1$ and $\gamma_2$ are the same curve with different directions.

This ends the proof of Claim 1, i.e. Theorem \ref{main-thm} is
valid when $m=\dim H=1$.

Next we prove Theorem \ref{main-thm} assuming
$m=\dim H>1$. We claim

{\bf Claim 2}: There exists at least $m-1$ geometrically distinct orbits $\mathcal{B}_i$
such that the each ${B}_i$ is a totally geodesic
sub-manifold with a non-trivial $H_o$-action.

Let $\mathcal{B}_i$ with $1\leq i\leq k'$ be all the geometrically distinct
$\hat{G}$-orbits of prime closed geodesics such that the
$H_o$-action on each ${B}_i$ is not trivial. Let $\mathfrak{h}_i$
be the co-dimension one subalgebra of $\mathfrak{h}$ which
restriction to ${B}_i$ is zero. By Lemma \ref{lemma-6}, the intersection $\cap_{i=1}^{k'}{\mathfrak{h}_i}=0$, from which we see
that $m\leq k'$. We may re-order the orbits $\mathcal{B}_i$'s such that
$\cap_{i=1}^m\mathfrak{h}_i=0$. Take a nonzero Killing vector field
$X\in\cap_{i=1}^{m-1}\mathfrak{h}_i$.
Then the zero set $N$ of $X$ is a closed connected totally geodesic submanifold in $M$, containing ${B}_i$ for $1\leq i\leq m-1$ but
not ${B}_m$. Let $H'$ be the subgroup of $I_o(N,F|_N)$
preserving all closed geodesics in $N$, and $\mathfrak{h}'$ its
Lie algebra.
The restriction from $M$ to $N$ defines
a linear map from $\mathfrak{h}$ to
$\mathfrak{h}'$ which kernel is spanned by $X$, so
$\dim H'\geq m-1$.

If $\dim N=1$, then $m=2$, $H_o$
has no fixed point, and $N$ itself provides the totally geodesic ${B}_i$ wanted by Claim 2.

If $\dim N>1$, we can use the inductive assumption to find $m-1$ geometrically distinct orbits $\mathcal{B}_i$
of prime closed geodesics for $(N,F|_N)$, as well as for $(M,F)$ by Lemma \ref{lemma-0}, such that the corresponding
${B}_i$'s are totally geodesic sub-manifolds, with non-trivial $H'_o$-actions. Claim 2 is proved when each of these ${B}_i$'s also has a non-trivial $H_o$-action.

But it is possible that there is some ${B}_i$ in $N$ on
which
the $H'_o$-action is non-trivial but the $H_o$-action is trivial. If it happens, this ${B}_i$ is unique, and
we must have $\dim H'>m-1$. So in this case, we can use the
inductive assumption to find $m$ geometrically distinct orbits
of prime closed geodesics. At least $m-1$ geometrically distinct totally geodesic ${B}_i$'s in $N$ have non-trivial $H_o$-actions.

This proves Claim 2.

To finish the proof of Theorem \ref{main-thm} when $n=k$,
We only need to find one more totally geodesic ${B}_i$ with a non-trivial $H_o$-action.

We may re-order the orbits $\mathcal{B}_i$'s such that the
first $m-1$ ones are those provided by Claim 2, and
$\cap_{i=1}^m\mathfrak{h}_i=0$. The
nonzero Killing vector field $X$ from $\cap_{i=1}^{m-1}\mathfrak{h}_{i}$ vanishes on ${B}_i$ with $1\leq i\leq m-1$, but not on ${B}_m$. We can find an isometry of the form
$\phi=\psi\exp (t'X)$ such that it fixes each point of ${B}_m$. On the other hand,
the fixed point set $N$
of $\phi$ does not contain each $\mathcal{B}_i$ for $1\leq i\leq m-1$.

The $H_o$-action on each closed geodesic in $N$ is non-trivial. Assume conversely that there is a closed geodesic in $N$ with a trivial $H_o$-action. Then
the restriction of $\psi$ to this geodesic coincides
with that of $\phi$, fixing each point of this geodesic. This
is not true because $\psi$ has no fixed points.

If $\dim N=1$ it is
a reversible closed geodesic, which is the extra ${B}_i$ we want. If $\dim N>1$ it is a Finsler
sphere satisfying $K\equiv 1$ and Assumption (F), isometrically imbedded in $(M,F)$ as a totally geodesic sub-manifold. In this
situation we use the inductive assumption one more time, which
provides one more totally geodesic ${B}_i$.

Summarizing above discussion, we have proved
Theorem \ref{main-thm} when $n=l$.

This ends the proof of Theorem \ref{main-thm} by induction.

\section{The example from Katok metrics}

We conclude this paper by the examples from Katok metrics
for which the estimate in Theorem \ref{main-thm} is sharp.

Let $(M,h)=(S^n,h)$ be a standard unit sphere with $n>1$,
$W$ a Killing vector field on $(M,h)$ such that
$h(W,W)<1$ everywhere.

Then the navigation process defines a Randers metric
\begin{equation*}
F(y)=\frac{\sqrt{
\lambda h(y,y)+h(W,y)^2}}{\lambda}-\frac{h(W,y)}{\lambda}
\end{equation*}
on $M$, in which $\lambda=1-h(W,W)$ is positive everywhere.

By the work of D. Bao, C. Robles and Z. Shen \cite{BRS2004},
this construction provides all the Randers spheres with $K\equiv 1$. The behavior of the geodesics on $(M,F)$
is determined by the choice of $W$.

We can find suitable coordinates $x=(\mathbf{x}_0,\mathbf{z}_1,\ldots,\mathbf{z}_k)$
for $x\in\mathbb{R}^{n+1}$, where
$$\mathbf{x}_0=(x_{0,1},\ldots,x_{0,n_0})\in\mathbb{R}^{n_0}
\mbox{ and }
\mathbf{z}_i=(z_{i,1},\ldots,z_{i,n_i})\in\mathbb{C}^{n_i},$$
satisfy the following:
\begin{description}
\item{\rm (A1)} We permit $n_0=0$ and in this case $\mathbf{x}_0$ is always 0. All other $n_i$'s are positive.
\item{\rm (A2)} $(M,h)$ is naturally identified as the unit sphere $S^n(1)$ defined by
$$|\mathbf{x}_0|^2+|\mathbf{z}_1|^2+\cdots+|\mathbf{z}|^2=1$$
in $\mathbb{R}^{n+1}=\mathbb{R}^{n_0}\oplus
\mathbb{C}^{n_1}\oplus\cdots\oplus\mathbb{C}^{n_k}$ with the
standard product Euclidean metric.
\item{\rm (A3)}
$W$ can be presented as
\begin{equation}\label{003}
W(\mathbf{x}_0,\mathbf{z}_0,\ldots,\mathbf{z}_k)
=(0,\sqrt{-1}\lambda_1\mathbf{z}_1,
\ldots,\sqrt{-1}\lambda_k\mathbf{z}_k),
\end{equation}
such that $0<\lambda_1<\lambda_2<\ldots<\lambda_k<1$.
\end{description}

We further require one of the following is satisfied:
\begin{description}
\item{\rm (A4)} All $\lambda_i$'s are irrational numbers. For
any $1\leq i<j\leq k$,
$1$, $\lambda_i$ and $\lambda_j$ are linearly independent
over $\mathbb{Q}$.
\item{\rm (A5)} All $\lambda_i$'s are irrational numbers except one, $n_0$=0 and $n_i=1$ if $\lambda_i\in\mathbb{Q}$.
    If $\lambda_i$ and $\lambda_j$ are irrational numbers, $1$, $\lambda_i$ and $\lambda_j$
    are linearly independent.
\end{description}

Then we have
\begin{lemma} \label{lemma-7}
For the Randers sphere $(M,F)$ described above, satisfying (A1)-(A3) and one of (A4) and (A5), any closed geodesic on $(M,F)$ must be contained in
$$\mathbf{z}_1=\cdots=\mathbf{z}_k=0$$
or
$$\mathbf{x}_0=0\mbox{ and }\mathbf{z}_j=0\mbox{ when }j\neq i,$$
for some $i$, $1\leq i\leq k$.
\end{lemma}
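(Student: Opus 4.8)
The plan is to reduce the statement to an elementary commensurability question about the frequencies occurring in the geodesics, by exploiting the navigation description of the $F$-geodesics. Since $W$ is Killing for $h$, it generates a one-parameter group of $h$-isometries $\phi_t=\exp(tW)$, and the Zermelo navigation process has the feature that the $F$-geodesics of $F$-unit speed are exactly the curves $c(t)=\phi_t(\gamma(t))$, where $\gamma(t)$ ranges over the $h$-geodesics of $h$-unit speed, i.e. the great circles of $(S^n,h)$ (see \cite{BRS2004}). A closed $F$-geodesic is then nothing but a periodic such curve, and periodicity of $c$ as a curve automatically forces periodicity of $\dot c$, so the whole problem becomes: for which great circles $\gamma$ is $\phi_t(\gamma(t))$ periodic?

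First I would make $\phi_t$ and $\gamma$ explicit in the coordinates of (A2)--(A3). By (A3) the flow acts as the identity on the $\mathbb{R}^{n_0}$-factor and as multiplication by $e^{\sqrt{-1}\lambda_i t}$ on each $\mathbb{C}^{n_i}$-factor. A great circle can be written blockwise as $\gamma(t)=e^{\sqrt{-1}t}\mathbf{a}+e^{-\sqrt{-1}t}\mathbf{b}$, where $\mathbf{a},\mathbf{b}$ are the usual complex amplitudes built from the orthonormal pair $p,q\in\mathbb{R}^{n+1}$ defining $\gamma$. Applying $\phi_t$ block by block then gives
\begin{equation*}
c(t)_0=e^{\sqrt{-1}t}\mathbf{a}_0+e^{-\sqrt{-1}t}\mathbf{b}_0,\qquad
c(t)_i=e^{\sqrt{-1}(\lambda_i+1)t}\mathbf{a}_i+e^{\sqrt{-1}(\lambda_i-1)t}\mathbf{b}_i,
\end{equation*}
so that $c(t)$ is a finite exponential sum whose active frequencies are $\pm 1$ coming from an active real block and $\lambda_i\pm 1$ coming from each active complex block. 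Here a block is active exactly when the corresponding component of $c$ is not identically zero, and for the real block both frequencies $\pm1$ necessarily occur together, since $\mathbf{p}_0,\mathbf{q}_0$ are real.

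Next I would invoke the linear independence of the characters $t\mapsto e^{\sqrt{-1}\omega t}$ for distinct $\omega$: the exponential sum $c(t)$ is periodic of some period $T$ if and only if $e^{\sqrt{-1}\omega T}=1$ for every active frequency $\omega$, i.e. if and only if all active frequencies are pairwise commensurable. It then remains to show that (A4) or (A5) forbids two distinct blocks from being simultaneously active. If the real block and a complex block $i$ are both active, commensurability of $1$ with $\lambda_i\pm 1$ forces $\lambda_i\in\mathbb{Q}$, impossible under (A4) (and under (A5) the real block is absent, as $n_0=0$). If two complex blocks $i\neq j$ are both active, pairing an active frequency $\lambda_i\pm1$ with an active frequency $\lambda_j\pm1$, commensurability yields $q(\lambda_i\pm1)=p(\lambda_j\pm1)$ with an integer $q\neq0$, hence a nontrivial $\mathbb{Q}$-linear relation among $1,\lambda_i,\lambda_j$; this contradicts the independence hypothesis in (A4)/(A5) when both $\lambda_i,\lambda_j$ are irrational, while if one of them is the single rational exponent permitted by (A5), the ratio of the nonzero rational frequency $\lambda_i\pm1$ (nonzero because $\lambda_i\in(0,1)$) to the irrational one $\lambda_j\pm1$ is irrational, again a contradiction. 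Hence at most one block can be active; since the great circle is nonzero, exactly one is, which is precisely the asserted confinement of $c$ to $\mathbf{z}_1=\cdots=\mathbf{z}_k=0$ or to $\mathbf{x}_0=0,\ \mathbf{z}_j=0\ (j\neq i)$.

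The main obstacle I anticipate is not the arithmetic case analysis, which is routine once set up, but pinning down the navigation description precisely --- in particular the correct normalization in $c(t)=\phi_t(\gamma(t))$ (the sign of $t$ in the flow being immaterial for the frequency bookkeeping) and the fact that a closed $F$-geodesic is exactly a periodic curve of this form, so that no closed geodesic is overlooked. Once the exponential-sum form of $c(t)$ and the commensurability criterion are justified rigorously, the conditions (A4) and (A5) were tailored precisely to kill every cross-block relation, and the conclusion follows at once.
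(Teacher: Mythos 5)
Your proposal is correct, but it takes a genuinely different route from the paper. The paper never integrates the geodesic equations: it writes down the antipodal map explicitly, $\psi(\mathbf{x}_0,\mathbf{z}_1,\ldots,\mathbf{z}_k)=(-\mathbf{x}_0,-e^{\sqrt{-1}\pi\lambda_1}\mathbf{z}_1,\ldots,-e^{\sqrt{-1}\pi\lambda_k}\mathbf{z}_k)$, observes that finite $\psi$-orbits occur only where a single block (the real one, or a rational-$\lambda_i$ complex one) is nonzero, and then uses the fact --- developed in its Section 3 --- that on a closed geodesic of length an irrational multiple of $\pi$ the $\psi$-orbit of any point is dense in the geodesic; projecting to coordinate blocks then kills every mixed configuration (the projection would be dense in a two-torus, or a finite set with more than one point, contradicting that a closed geodesic is a connected curve). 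You instead invoke the Killing-navigation description $c(t)=\phi_t(\gamma(t))$ of all $F$-geodesics, reduce closedness to simultaneous rationality of $\omega T/2\pi$ for the active frequencies $\pm1$, $\lambda_i\pm1$ (using independence of characters), and let (A4)/(A5) rule out any two blocks being active at once; your observation that $\lambda_i\pm1\neq0$ because $0<\lambda_i<1$ is exactly the point needed to make commensurability meaningful. The trade-off: the paper's argument uses a softer input (only the antipodal map, which it has already analyzed, so the lemma meshes with its general machinery), while yours requires the full geodesic correspondence for Killing navigation --- a standard fact, though you should cite it carefully, as it is sometimes attributed to Robles' geodesics paper or Foulon--Matveev rather than appearing verbatim in \cite{BRS2004} --- in exchange for a self-contained, purely arithmetic proof that yields strictly more information, e.g.\ that within an irrational block only the single-frequency circles (where one of $\mathbf{a}_i,\mathbf{b}_i$ vanishes) close up, which is precisely the known structure of the Katok examples. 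Both proofs are complete; no gap.
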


\begin{proof}
Using (\ref{003}), we can present the antipodal map as
$$\psi(\mathbf{x}_0,\mathbf{z}_1,\ldots,\mathbf{z}_k)
=(\mathbf{x}_0,-e^{\sqrt{-1}\pi\lambda_1}\mathbf{z}_1,\ldots,
-e^{\sqrt{-1}\pi\lambda_k}\mathbf{z}_k).$$
It is easy to check that finite $\psi$-orbits only appear
in the situation that only $\mathbf{x}_0$ is nonzero or only $\mathbf{z}_i$
with $\lambda_i\in\mathbb{Q}$ is nonzero.

Let $x=(\mathbf{x}_0,\mathbf{z}_1,\ldots,\mathbf{z}_k)$
be a point on the closed geodesic $\gamma$. We only need to
prove that only one of $\mathbf{x}_0$ and $\mathbf{z}_i$'s
can be nonzero. Assume conversely this is not true. Then
the length of $\gamma$ can not be a rational multiple of
$\pi$ (i.e. consists of finite $\psi$-orbits), so the $\psi$-orbit of $x$ is a dense subset in $\gamma$. There are three cases we need to consider.

In the first case,
$\lambda_i$ and $\lambda_j$ are irrational numbers,
$\mathbf{z}_i\neq 0$, and $\mathbf{z}_j\neq 0$.
Then the condition that 1, $\lambda_i$ and $\lambda_j$
are linearly independent implies that the projection to
the $\mathbf{z}_i$- and $\mathbf{z}_j$-factors maps the closed curve $\gamma$
onto a two dimensional torus, which is a contradiction.

In the second case, $\lambda_i$ is rational, $\lambda_j$ is not,
$\mathbf{z}_i\neq 0$ and $\mathbf{z}_j\neq 0$.
Then the projection to the $\mathbf{z}_i$-factor maps $\gamma$ to a finite set with at least two points. This is impossible
because $\gamma$ is connected.

In the third case, $\mathbf{x}_0\neq 0$ and $\mathbf{z}_i\neq 0$. Then the projection to the $\mathbf{x}_0$-factor maps
$\gamma$ to two points. This is impossible for the same reason as the previous case.

To summarize, we have found contradiction for all the cases,
and finished the proof of this lemma.
\end{proof}

Using Lemma \ref{lemma-7}, we can provides examples
of Katok metrics such that the estimates in Theorem \ref{main-thm} is sharp.

\begin{theorem}
Let $F$ be the Randers metrics on $S^n$ with $n>1$
satisfying (A1)-(A3) and one of (A4) and (A5),
then it has only finite orbits of prime closed geodesics.
Denote $H$ the subgroup of isometries preserving each
closed geodesic, $H_o$ its identity component, and $m=\dim H$. Then there exist exactly
$m$ geometrically distinct $\mathcal{B}_i$, such that the corresponding
${B}_i$'s are totally geodesic with non-trivial
$H_o$-actions.
\end{theorem}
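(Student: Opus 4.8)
The plan is to verify the two assertions of the theorem in turn: first that the Randers sphere satisfies Assumption (F), and second that the estimate from Theorem \ref{main-thm} is achieved exactly, with every relevant $B_i$ carrying a non-trivial $H_o$-action. The main tool is Lemma \ref{lemma-7}, which pins down the location of every closed geodesic: each one lies either in the real subspace $\{\mathbf{z}_1=\cdots=\mathbf{z}_k=0\}$ or in one of the complex subspaces $\{\mathbf{x}_0=0,\ \mathbf{z}_j=0\text{ for }j\neq i\}$. Each such slice is a totally geodesic subsphere on which $F$ restricts to a lower-dimensional Randers metric of constant flag curvature, so I can analyze the closed geodesics slice by slice.

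First I would establish Assumption (F). On the real slice $\{\mathbf{z}_\ast=0\}$, if $n_0>1$ the metric restricts to the round sphere $S^{n_0-1}$ (since $W$ vanishes there), whose geodesics form a single $\hat{G}$-orbit; if $n_0\leq 1$ this slice contributes nothing or a single point. On the $i$-th complex slice $S^{2n_i-1}$, the restricted metric is the homogeneous Randers sphere determined by the single rotation parameter $\lambda_i$; by Corollary \ref{cor-1} it has at most two $\hat{G}$-orbits of prime closed geodesics (one orbit, with reversed directions counted twice, when $\lambda_i$ is irrational). Since there are finitely many slices and each contributes finitely many orbits, and since Lemma \ref{lemma-7} guarantees these exhaust all closed geodesics, $(M,F)$ has only finitely many orbits of prime closed geodesics, i.e. Assumption (F) holds. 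I would then invoke Theorem \ref{main-thm} to get the lower bound of $m=\dim H$ geometrically distinct totally geodesic $B_i$'s with non-trivial $H_o$-action.

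For the sharpness (the upper bound), I would compute $H$ and $m$ directly and then count the $B_i$'s that actually carry a non-trivial $H_o$-action. The center of $\mathfrak{g}$ relevant to $\mathfrak{h}$ is spanned by the rotation generators of the complex factors together with $W$; under the genericity hypotheses (A4)/(A5) the Killing fields tangent to every closed geodesic are exactly those whose restriction to each slice is a multiple of the slice's geodesic direction, and a dimension count using the linear-independence conditions over $\mathbb{Q}$ identifies $\dim H=m$ with the number of slices contributing a reversible closed geodesic or subsphere of the type in Lemma \ref{lemma-2}. The key point is that genericity forces every nonzero $X\in\mathfrak{h}$ to act non-trivially on precisely the slices where it does not vanish, so distinct slices give geometrically distinct $B_i$'s and no slice is double-counted; this matches the constructions of Claims 1 and 2 in the proof of Theorem \ref{main-thm} exactly, yielding exactly $m$ such orbits.

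The hard part will be the bookkeeping that shows no extra $B_i$ beyond the predicted $m$ can have a non-trivial $H_o$-action, and that the $m$ produced are genuinely geometrically distinct rather than coinciding across slices. This is where the genericity assumptions (A4) and (A5) are essential: they prevent resonances between the $\lambda_i$'s that would otherwise merge orbits from different complex factors or create additional closed geodesics off the coordinate slices. I would handle this by showing that the map sending $X\in\mathfrak{h}$ to its tuple of slice-coefficients $\rho_{X,i}$ is injective with image of dimension exactly $m$, so that the co-dimension-one subalgebras $\mathfrak{h}_i$ of Claim 2 intersect trivially in exactly $m$ independent conditions, forcing the count to be sharp.
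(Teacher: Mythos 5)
Your overall strategy---localize every closed geodesic to the coordinate slices via Lemma \ref{lemma-7}, verify Assumption (F) slice by slice, then compute $H$ and enumerate the slices carrying a non-trivial $H_o$-action---is the same route the paper takes: its proof is exactly such a case-by-case calculation of $G$, $H$ and the orbits of prime closed geodesics, with the real slice contributing the one totally geodesic sphere with trivial $H_o$-action and each complex slice contributing one $B_i$ with non-trivial action. Your sharpness bookkeeping is also sound in spirit: the injectivity of $X\mapsto(\rho_{X,i})$ that you propose to prove is in fact immediate from Lemma \ref{lemma-6}(2), and the geometric distinctness of the slices is automatic because they are pairwise disjoint subsets of $S^n$.

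There is, however, one genuine logical gap in your plan for the first assertion. You justify the finiteness of orbits on each complex slice $S^{2n_i-1}$ by citing Corollary \ref{cor-1}, but that corollary carries Assumption (F) as a hypothesis; at this point of your argument, (F) for the slice is precisely what remains to be proven, so the citation is circular. Lemma \ref{lemma-0}, which you would need to transfer orbit structure between the slice and the ambient sphere, assumes (F) for the ambient space as well, so it cannot rescue the step. Note that Lemma \ref{lemma-7} only confines closed geodesics to the slices; within a slice with $n_i>1$ it imposes no further restriction. What is needed---and what the paper's ``it is not hard to calculate \dots all the orbits of prime closed geodesics'' silently supplies---is the direct Katok/navigation computation: on $(S^{2n_i-1},F)$ with $W$ of constant irrational length $\lambda_i$, an $F$-geodesic is an $h$-great circle composed with the flow of $W$, and such a curve closes up only if the great circle is a fiber of the $S^1$-action generated by $W$; this yields exactly two $\hat{G}$-orbits of prime closed geodesics (the fibers with their two directions, of lengths $2\pi/(1\pm\lambda_i)$), after which Corollary \ref{cor-1} and Lemma \ref{lemma-0} become legitimately available. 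A minor slip besides: when $n_0=1$ the real slice is $S^0$, i.e.\ two points (a $\psi$-orbit), not a single point---though it still contributes no closed geodesics, so the count is unaffected.
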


The proof is a case-by-case discussion.
For each case,
it is not hard to calculate $G=I_o(M,F)$, $H_o$ and all the
orbits of prime closed geodesics.

For example, when $n_0>2$
and all $\gamma_i$'s are irrational numbers,
\begin{eqnarray*}
G&=&SO(n_0)\times U(n_1)\times\cdots\times U(n_k),\mbox{ and}\\
H&=& U(n_1)\times\cdots\times U(n_k),
\end{eqnarray*}
so we have $\dim H=k$.

When $1\leq i\leq k$,
$${B}_i=\{x=
(\mathbf{x}_0,\mathbf{z}_1,\ldots,\mathbf{z}_k)\in M\mbox{ with }\mathbf{x}_0=0\mbox{ and }\mathbf{z}_j=0\mbox{ when }j\neq i\}$$
is a homogeneous Randers sphere with exactly two orbits of
prime closed geodesics. It is isometrically imbedded in $(M,F)$
as a totally geodesic submanifold, because it is the fixed point set of the subgroup of $G$ with the $U(n_i)$-factor removed.
They provide all the different totally geodesic ${B}_i$'s with nontrivial $H_o$-actions.

There exists one more totally geodesic ${B}_{k+1}$
with a trivial $H_o$-action, i.e.
$${B}_{k+1}=\{x=
(\mathbf{x}_0,\mathbf{z}_1,\ldots,\mathbf{z}_k)\in M\mbox{ with }\mathbf{z}_1=\cdots=\mathbf{z}_k=0\}.$$
It is a standard unit sphere with only one orbit of closed geodesics.

By Lemma \ref{lemma-7}, no other closed geodesics can be found.

Summarizing all these observations, we see that this Randers sphere $(M,F)$ satisfies all the requirements in Theorem \ref{main-thm},
and the estimate in Theorem \ref{main-thm} for the number
of totally geodesic ${B}_i$'s is sharp.

The discussion for other cases is similar, so we skip
the details.

\end{document}